\tikzset{joinlabel/.code=\tikzset{after node path={%
\ifx\tikzchainprevious\pgfutil@empty\else(\tikzchainprevious)%
 edge[every join]#1(\tikzchaincurrent)\fi}}}
\DeclareMathOperator{\spn}{span}
\newtheorem{prop}{Proposition}
\newtheorem{cor}[prop]{Corollary}
\newtheorem{lem}[prop]{Lemma}
\newtheorem{fact}[prop]{Fact}
\newtheorem{ques}{Question}
\newtheorem{ex}[prop]{Example}
\newtheorem*{thmA}{Theorem A}
\newtheorem*{thmA*}{Theorem A*}
\newtheorem*{thmB}{Theorem B}
\newtheorem{rem}[prop]{Remark}
\numberwithin{prop}{section} 
\numberwithin{claim}{prop}
\numberwithin{equation}{section}
\newcommand{\ul}[1]{\underline{#1}}
\newcommand{\biB}{\mathrm{Bi}}
\newcommand{\End}{\mathrm{End}}
\newcommand{\coker}{\mathrm{coker}}
\newcommand{\Hom}{\mathrm{Hom}}
\newcommand{\dExt}{\mathrm{dExt}}
\newcommand{\dH}{\mathrm{dH}}
\newcommand{\ccd}{\mathrm{cd}}
\newcommand{\idn}{\mathrm{ind}}
\newcommand{\Der}{\mathrm{Der}}
\newcommand{\PDer}{\mathrm{PDer}}
\newcommand{\FP}{\textup{FP}}
\newcommand{\ca}[1]{\mathcal{#1}}
\newcommand{\eu}[1]{\mathfrak{#1}}
\newcommand{\CO}{\mathfrak{CO}}
\newcommand{\Z}{\mathbb{Z}}
\newcommand{\Q}{\mathbb{Q}}
\newcommand{\N}{\mathbb{N}}
\newcommand{\E}{\mathrm{E}}
\newcommand{\V}{\mathrm{V}}
\newcommand{\caO}{\ca{O}}
\newcommand{\der}{\partial}
\newcommand{\eps}{\varepsilon}
\newcommand{\RG}{R[G]}
\newcommand{\QG}{\Q[G]}
\newcommand{\dis}{\mathbf{dis}}
\newcommand{\Mod}{\mathbf{mod}}
\newcommand{\QGmod}{{}_{\QG}\Mod}
\newcommand{\QGdis}{{}_{\QG}\dis}
\newcommand{\caR}{\ca{R}}
\newcommand{\eue}{\mathbf{e}}
\newcommand{\caT}{\ca{T}}
\newcommand{\argu}{\hbox to 7truept{\hrulefill}}
\begin{document}

\title[Rational discrete first degree cohomology for t.d.l.c. groups]{Rational discrete first degree cohomology for totally disconnected locally compact groups}
\author{Ilaria Castellano }
\address{University of Southampton, Salisbury Rd, Southampton SO17 1BJ, UK}
  \email[I.~Castellano]{ilaria.castellano88@gmail.com}
  
\maketitle
\begin{abstract}  It is well-known that the existence of more than two ends in the sense of J.R. Stallings for a finitely generated discrete group $G$ can be detected on the cohomology group $\mathrm{H}^1(G,\RG)$, where $R$ is either a finite field, the ring of integers or the field of rational numbers. It will be shown (cf. Theorem~A*) that for a compactly generated totally disconnected locally compact group $G$ the same information about the number of ends of $G$ in the sense of H. Abels can be provided by $\dH^1(G,\biB(G))$, where $\biB(G)$ is the rational discrete standard bimodule of $G$, and $\dH^\bullet(G,\argu)$ denotes rational discrete cohomology as introduced in \cite{it:ratdiscoh}. 

As a consequence one has that the class of fundamental groups of a finite graph of profinite groups coincides with the class of compactly presented totally disconnected locally compact  groups of rational discrete cohomological dimension at most 1 (cf.~Theorem B). 
\end{abstract}

\section{Introduction}\label{s:intro}

\let\thefootnote\relax\footnote{This work was supported by Gruppo Nazionale per le Strutture Algebriche, Geometriche e le loro Applicazioni GNSAGA-INdAM,  by Programma SIR 2014 - MIUR (Project GADYGR) Number RBSI14V2LI cup G22I15000160008 and by EPSRC Grant N007328/1 Soluble Groups and Cohomology.}

For a totally disconnected locally compact (= t.d.l.c.) group $G$ several cohomology theories can be introduced, e.g., the Bredon cohomology with respect to the family of all compact open subgroups of $G$ and the continuous cohomology via cochain complexes. In this paper we investigate the rational discrete first degree cohomology of a t.d.l.c.\! group $G$ as introduced in \cite{it:ratdiscoh}. In Remarks~\ref{rem:bredon} and \ref{rem:continuous} we provide a brief comparison of this cohomology theory with Bredon and continuous cohomology, respectively.

A left $\QG$-module $M$ is said to be \emph{discrete} if the map $\argu\cdot\argu\colon G\times M\to M$ is continuous, where $M$ carries the discrete topology. The category $\QGdis$ of discrete left $\QG$-modules is an abelian category with both enough injectives and projectives. The right derived functors of $\Hom_{\QG}(\argu,\argu)$ have been denoted by $\dExt_{G}^\bullet(\argu,\argu)$, and, for any $k\geq0$, the group  $$\dH^k(G,\argu)=\dExt^k(\Q,\argu)$$ is defined to be the \emph{$k^{th}$ rational discrete cohomolo\-gy group} of $G$ with coefficients in $\QGdis$ (a brief introduction to this cohomology theory and some properties we use further on are given in $\S$\ref{ss:ratdis}).

In this paper we provide several results on the low-dimensional rational discrete cohomology of $G$ by analogy with the discrete case.
In section 3, we show that the functor $\dH^1(G,\argu)$ can be described by means of continuous derivations (cf. Propositions~\ref{prop:1cohomgr}), and consequently by the almost invariant functions when we consider coefficients in a discrete permutation module (cf. Proposition~\ref{prop:almcoh}).

In section 4, we prove the first main theorem of this paper (cf.~Theorem~A*), which provides a cohomological interpretation of Stallings' decomposition theorem for compactly generated t.d.l.c. groups (cf.~Theorem~A). A compactly generated t.d.l.c.\! group $G$ is said to \emph{split non-trivially over a compact open subgroup $K$} if one of the following holds:
    \begin{itemize}
    \item[(S1)] $G$ is a free product with amalgamation $H\ast_K J$, where $H$ and $J$ are compactly generated open subgroups satisfying $K\neq H$ and $K\neq J$;
    \item[(S2)] $G$ is a HNN-extension $H\ast_K^t$ with stable letter $t$, where  $H$ is a compactly generated open subgroup of $G$.
    \end{itemize}
The space of {\it rough ends} of a compactly generated t.d.l.c. group $G$ is defined to be the end space of a rough Cayley graph of $G$ (cf. \cite[\S 3]{kronmoller:rough} and $\S$\ref{ss:rough ends}). Thus the analogue  of Stallings' decomposition theorem for t.d.l.c. groups can be restated as follows.
\begin{thmA}[\protect{\cite[Theorem 13]{kronmoller:rough}}]\label{thmA} Let $G$ be a compactly generated t.d.l.c. group, and let $e(G)$ denote the number of rough ends of $G$. Then the following are equivalent:
\begin{itemize}
\item[(a)] $e(G)>1$, i.e., $G$ has more than one rough end;
\item[(b)] $G$ splits non-trivially over a compact open subgroup. 
\end{itemize}
\end{thmA}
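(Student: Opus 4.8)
The plan is to prove the two implications separately, treating $(b)\Rightarrow(a)$ by Bass--Serre theory and $(a)\Rightarrow(b)$ by Dunwoody's structure tree machinery adapted to rough Cayley graphs.

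For $(b)\Rightarrow(a)$ I would argue as follows. Suppose $G$ splits as in (S1) or (S2). In either case $G$ acts without inversion on the associated Bass--Serre tree $T$, with a single orbit of edges, with edge stabilizers the conjugates of the compact open subgroup $K$, and with vertex stabilizers the open subgroups $H$ (and $J$ in case (S1)). The non-triviality conditions $K\neq H$ and $K\neq J$ force $[H:K]\geq 2$ and $[J:K]\geq 2$, so $T$ is infinite; the HNN presentation makes $T$ infinite in case (S2). Since the action is cocompact with compact open stabilizers and a rough Cayley graph $\Gamma$ of $G$ is connected and locally finite, $\Gamma$ is quasi-isometric to $T$. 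As the number of ends is a quasi-isometry invariant, $e(G)=e(\Gamma)=e(T)\geq 2$, whence $e(G)>1$.

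For $(a)\Rightarrow(b)$, the substantive direction, I would fix a rough Cayley graph $\Gamma$ of $G$, which is connected, locally finite, and carries a $G$-action with compact open vertex stabilizers and finitely many vertex orbits. The hypothesis $e(\Gamma)>1$ means $\Gamma$ possesses a \emph{cut}, i.e. a subset with finite coboundary whose complement is also infinite --- equivalently, by Proposition~\ref{prop:almcoh}, a non-trivial almost invariant function. The heart of the matter is then Dunwoody's theorem: a connected locally finite graph with more than one end admits a $G$-invariant \emph{nested} family of cuts of uniformly bounded coboundary, which forms a tree set and hence determines a structure tree $T$ on which $G$ acts. I would then verify that (i) $T$ is non-trivial (not a single point), since the cuts are non-trivial; (ii) each edge stabilizer is the setwise stabilizer of a cut, fixing a finite coboundary, hence compact open --- the intersection of finitely many compact open vertex stabilizers, extended by a finite group, is compact open; and (iii) the action on $T$ is cocompact. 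Applying Bass--Serre theory to the action of $G$ on $T$ yields a graph-of-groups decomposition, and translating it into amalgam/HNN language while using compact generation to realise the vertex groups as compactly generated open subgroups produces exactly (S1) or (S2).

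The main obstacle is precisely the construction and $G$-equivariance of the nested family of cuts in the hard direction. The difficulty is twofold: one must produce cuts with \emph{bounded} coboundary that can be made pairwise nested, which is the combinatorial core of Dunwoody's argument, and one must check that the resulting edge stabilizers are compact. This last point is where the t.d.l.c. setting genuinely diverges from the classical one: the compactness of the open subgroup defining $\Gamma$, together with the finiteness of the coboundaries of the cuts, is exactly what replaces the "finite subgroup" of Stallings' theorem by a "compact open subgroup". Carrying this out carefully, and ruling out the degenerate splittings so that the non-trivial forms (S1)/(S2) hold, constitutes the technical heart of the proof.
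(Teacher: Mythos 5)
Your implication $(b)\Rightarrow(a)$ contains a genuine error. You assert that the Bass--Serre tree $T$ of the splitting is quasi-isometric to a rough Cayley graph $\Gamma$ because ``the action is cocompact with compact open stabilizers''. But in (S1) and (S2) the vertex groups $H$ and $J$ are only required to be compactly generated \emph{open} subgroups; they need not be compact (and in the interesting cases they are not). Hence the $G$-action on $T$ is cocompact but not proper, $T$ need not be locally finite, and no quasi-isometry $\Gamma\to T$ exists in general -- compare $F_2=\Z\ast\Z$, whose Bass--Serre tree for that splitting has infinite valence and is not quasi-isometric to the group. The conclusion $e(G)>1$ is still true, but it does not follow from the argument given; some genuine input (normal forms, almost invariant sets, or cohomology) is needed. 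The paper avoids this entirely by routing $(b)\Rightarrow(a)$ through the cohomological condition $(c)$: it first proves $(b)\Rightarrow \dH^1(G,\biB(G))\neq 0$ by a case analysis on whether the vertex groups are compact (using the exact sequences of Example~\ref{ex:ses2}, Fact~\ref{fact:bg} and Proposition~\ref{prop:FPbg}), and then deduces $(c)\Rightarrow(a)$ by extracting a proper almost $(G,K)$-invariant set from a nonzero class in $\dH^1(G,\Q[G/K])$ (Proposition~\ref{prop:almcoh}, Lemmas~\ref{lem:gen_ai} and \ref{lem:proper}). Your sub-case where both $H$ and $J$ are compact is the only one where the quasi-isometry argument is sound.

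For $(a)\Rightarrow(b)$ your outline (structure trees from nested cuts, then Bass--Serre theory) is the route the paper also takes, citing Dicks--Dunwoody and Dunwoody--Kr\"on for the construction of the tree. However, you have misplaced the technical heart: the step you dispatch in half a sentence -- ``using compact generation to realise the vertex groups as compactly generated open subgroups'' -- is precisely the hard part. Compact generation of $G$ does not formally pass to vertex stabilizers of an action on a tree; this is Proposition~\ref{prop:compgen} in the paper, proved by a homological argument (the trivial module is of type $\FP_1$, the horseshoe lemma applied to the edge--vertex sequence, and exactness of the induction functors), and Remark~\ref{kronmoller} explains that in Kr\"on--M\"oller this same step is the main content of their Theorem~1. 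Without an argument for compact generation of the vertex groups you cannot conclude that the splitting has the form (S1) or (S2) as defined in the introduction.
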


This splitting theorem is essentially due  to Abels \cite[Struktursatz 5.7, Korollar 5.8]{abels} and \cite[\S 3.6]{kronmoller:rough} explains the relation with Abels' work in detail. In particular, it has been shown that the ideal points of the Specker compactification of a compactly generated t.d.l.c. group $G$ can be identified with the rough ends of $G$, which definition here is recalled in $\S 2.2$. 

\medskip

The main purpose of this paper is to give the following cohomological reformulation of Theorem~A.
\begin{thmA*} Let $G$ be a compactly generated t.d.l.c.\! group. Then $(a)$ and $(b)$ of Theorem~A are equivalent to
$$ (c)\quad\dH^1(G,\biB(G))\neq 0.$$
\end{thmA*}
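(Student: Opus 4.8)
The strategy is to invoke Theorem~A, which already gives $(a)\Leftrightarrow(b)$, and to reduce Theorem~A* to the single equivalence $(a)\Leftrightarrow(c)$: that $\dH^1(G,\biB(G))\neq0$ holds precisely when $G$ has more than one rough end. This is the exact analogue of the classical statement that $e(G)>1$ is detected by $\mathrm H^1(G,\RG)$, and I would try to establish the sharper assertion that $\dH^1(G,\biB(G))$ is isomorphic, as a $\Q$-vector space, to the reduced zeroth cohomology of the rough end space, of dimension $e(G)-1$.

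Fix a compact open subgroup $K$ and a compact generating set $S$, so that $G/K$ is the vertex set of the rough Cayley graph $\Gamma(G,K,S)$. As a discrete left $\QG$-module the standard bimodule is the directed union $\biB(G)=\varinjlim_{K'}\Q[G/K']$ of its right-$K'$-invariant parts, and the transition maps $\Q[G/K']\hookrightarrow\Q[G/K'']$ admit $\QG$-equivariant retractions given by averaging over cosets; in particular each permutation module $\Q[G/K']$ is a direct summand of $\biB(G)$. Combining Propositions~\ref{prop:1cohomgr} and~\ref{prop:almcoh}, $\dH^1(G,\Q[G/K])$ is computed by the continuous derivations $G\to\Q[G/K]$ modulo principal ones, and is thereby identified with the space of almost invariant functions $G/K\to\Q$ modulo those that differ from a constant by a finitely supported function. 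Because $G/K$ is the locally finite vertex set of $\Gamma(G,K,S)$, this quotient is exactly $\tilde{\mathrm H}^{0}$ of the end space of $\Gamma(G,K,S)$, i.e.\ $\Q^{\,e(G)-1}$.

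It remains to transfer this computation from $\Q[G/K]$ to the full bimodule $\biB(G)$. In one direction the summand decomposition $\biB(G)=\Q[G/K]\oplus\kernel P_K$ splits $\dH^{1}$ accordingly, so $\dH^1(G,\Q[G/K])\neq0$ forces $\dH^1(G,\biB(G))\neq0$; this already yields $(a)\Rightarrow(c)$. For the converse I would argue that a continuous derivation $d\colon G\to\biB(G)$ is, modulo a principal derivation, supported at a single finite level: continuity together with compact generation confines the values $d(S)$ to some $\Q[G/K']$, and a Specker-type rigidity argument---available precisely when $e(G)\le1$---shows that the remaining coordinates in $\kernel P_{K'}$ can be absorbed into a coboundary. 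Hence $\dH^1(G,\biB(G))\cong\varinjlim_{K'}\dH^1(G,\Q[G/K'])\cong\Q^{\,e(G)-1}$, giving $(c)\Rightarrow(a)$ and completing the equivalence with Theorem~A.

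The step I expect to be the main obstacle is exactly this level reduction for $\biB(G)$-valued derivations. Unlike the permutation module $\Q[G/K]$, the bimodule $\biB(G)$ is only a filtered colimit of such modules, and because $G$ is merely assumed compactly generated (type $\FP_1$ rather than $\FP_2$) the functor $\dH^1(G,\argu)$ need not commute with this colimit on the nose; the natural map $\varinjlim_{K'}\dH^1(G,\Q[G/K'])\to\dH^1(G,\biB(G))$ is injective but its surjectivity must be extracted by hand. Establishing that a single continuous derivation into $C_{c}(G,\Q)$ becomes level-bounded after subtracting a suitable coboundary---so that no genuinely new classes appear in the colimit---is the delicate point, and it is here that the one-ended rigidity underlying Stallings' theorem does the essential work.
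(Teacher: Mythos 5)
Your reduction of Theorem~A* to the single equivalence $(a)\Leftrightarrow(c)$ is legitimate in principle, and two of your ingredients are sound: the description of $\dH^1(G,\Q[G/K])$ by almost invariant functions is Proposition~\ref{prop:almcoh}, and your observation that $gV\mapsto gU$ retracts $\eta_{U,V}$ $G$-equivariantly, so that each $\Q[G/U]$ is a direct summand of $\biB(G)$ and $\dH^1(G,\Q[G/U])\neq 0$ forces $\dH^1(G,\biB(G))\neq0$, is correct and is a clean substitute for the injectivity statement the paper extracts from the proof of \cite[Prop.~4.7]{it:ratdiscoh}. But the load-bearing step of your $(a)\Rightarrow(c)$ --- that $\ca{A}Inv_K(G,\Q)/(C(G/K)+\Q[G/K]^K)$ ``is exactly $\tilde{\mathrm H}^0$ of the end space'' --- is asserted, not proved, and it is precisely the nontrivial half of the ends/cohomology dictionary. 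What you need is the converse of Lemma~\ref{lem:proper}: that a vertex cut of the rough Cayley graph produces a \emph{proper almost $(G,K)$-invariant subset} of $G/K$, i.e.\ one with $gB=_aB$ for the \emph{left} translation action and $kB=B$ for $k\in K$. Since the graph structure on $G/K$ is built from right multiplication by $S$ while the module action is by left multiplication, finite coboundary does not directly give left almost invariance (in a free group the half-tree $C=\{\text{reduced words starting with }a\}$ has a single geometric coboundary edge, yet $bC\bigtriangleup C=bC\cup C$ is infinite); the passage between the two goes through the inversion $gK\mapsto g^{-1}K$, exactly as in the displayed computation inside the proof of Lemma~\ref{lem:proper}, and must be carried out. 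The paper sidesteps this entirely by proving $(a)\Rightarrow(b)\Rightarrow(c)$: the splitting supplied by Theorem~A gives a short exact sequence of discrete permutation modules (Example~\ref{ex:ses2}), and $(b)\Rightarrow(c)$ becomes a finite homological computation in three cases, using Fact~\ref{fact:bg}, Proposition~\ref{prop:FPbg}, and an explicit non-surjectivity of $\delta^*$. Your further claim that the dimension equals $e(G)-1$ is not meaningful when $e(G)=\infty$ and is never needed.

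For $(c)\Rightarrow(a)$, the ``Specker-type rigidity argument, available precisely when $e(G)\le1$'' is not an argument, and the difficulty you locate there is illusory. Since $G$ is compactly generated it is of type $\FP_1$ (Proposition~\ref{prop:fp1}), and the transition maps $\eta_{U,V}$ are injective, so Remark~\ref{rem:fp} already gives $\dH^1(G,\biB(G))\cong\varinjlim_{U}\dH^1(G,\Q[G/U])$ with no hypothesis whatsoever on the number of ends; no case distinction and no level reduction of individual cocycles is required (indeed, a derivation vanishing on $K$ takes values in the $\QG$-submodule generated by the finite set $d(S)$, hence already lands in a single $\Q[G/K']$). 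This is exactly how the paper argues: a nonzero class must live at some finite level $K$, Proposition~\ref{prop:almcoh} together with Lemma~\ref{lem:gen_ai} produce a proper almost $(G,K)$-invariant set from it, and Lemma~\ref{lem:proper} --- the direction the paper does prove --- yields $e(G)>1$. The correct division of labour is therefore the opposite of the one you propose: the colimit step is formal, and the genuinely hard implication is the one you compressed into ``exactly $\tilde{\mathrm H}^0$ of the end space.''
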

Here $\biB(G)$ denote the \emph{rational discrete standard bimodule} introduced in \cite{it:ratdiscoh} to be a suitable substitute of the group algebra $\QG$ in the context of rational discrete cohomology.  The rational discrete standard bimodule is defined by 
\begin{equation}\label{eq:biB2}
\biB(G)=\varinjlim_{\caO\in\CO(G)} (\Q[G/\caO],\eta_{U,V}),
\end{equation}
where $\CO(G)=\{\,\caO\subset G\mid \caO\ \text{compact open subgroup}\,\}$, and the direct limit is taken along the injective mappings
\begin{equation}
\eta_{U,V}\colon\Q[G/U]\to\Q[G/V],\quad \eta_{U,V}(gU)=\frac{1}{|U:V|}\sum_{r\in\ca{R}}grV,\quad g\in G
\end{equation}
where $V\subset U$ are compact open subgroups of $G$ and $\ca{R}$ is a set of coset representatives of $U/V$. 

Now the new condition (c) guarantees a non-trivial splitting of a compactly gene\-rated t.d.l.c. group by knowing a single cohomology group as \cite[Theorem~IV 6.10]{dicksdun} guarantees for finitely generated discrete groups.  

The presence of the cohomological condition (c) leads us to prove Theorem~A* by means of the chain of implications $(a)\Rightarrow (b)\Rightarrow (c)\Rightarrow (a)$. Clearly, $(a)\Rightarrow (b)$ has been already proven in Theorem~A. Nevertheless, we prefer to provide a similar but substantially different proof (cf. Remark~\ref{kronmoller}).  Moreover, we obtain a new proof for $(b)\Rightarrow (a)$ going through $(c)$  that clarify how the three different aspects of a compactly generated t.d.l.c. group encoded in the conditions (a),(b) and (c) are related.

\medskip 

Stallings' theory of ends for discrete groups had certainly a major impact on geometric group theory. For example, his decomposition theorem - together with Bass-Serre's theory of groups acting on trees - was an essential tool for proving important results on groups of (virtual) cohomological dimension 1 like the Stallings-Swan theorem (cf.~\cite{stal:torsion,swan:coh1}) or the Karrass-Pietrowski-Solitar theorem (cf.~\cite{kps}). In particular, Stallings' decomposition theorem led naturally to the accessibility problem for finitely generated groups. Within the framework of Bass-Serre theory, a finitely generated group is said to be {\it accessible} if it is isomorphic to a fundamental group of a finite graph of groups such that every edge group is finite and every vertex group is a finitely generated group with at most one end. Equivalently, a compactly generated t.d.l.c. group can be defined to be {\it accessible} if it has an action on a tree such that
\begin{itemize}
\item[(A1)] the number of the $G$-orbits on the edges is finite;
\item[(A2)] the edge-stabilizers are  compact open subgroups of $G$;
\item[(A3)] every vertex-stabilizer is a compactly generated open subgroup  of $G$ with at most one rough end.
\end{itemize}
In 1991 M.J. Dunwoody \cite{dun:inaccessible} constructed an inaccessible finitely generated (discrete) group with infinitely many ends. In \cite{kronmoller:rough} the authors related the accessibility of a compactly generated t.d.l.c. group $G$ to the accessibility of some (and hence all)  rough Cayley graph of $G$, which is the analogue of \cite[Theorem 1.1]{random}. In 1985 M.J. Dunwoody \cite{dun:finpres} proved that every finitely presented (discrete) group has to be accessible. The analogue of this result in the context of t.d.l.c. groups is due to Y. Cornulier \cite{corn:qi}. By using this accessibility result, we prove the second main theorem of this paper (cf. Theorem~B).
\begin{thmB}
For every t.d.l.c. group $G$, the following are equivalent:
\begin{itemize}
\item[(i)] the group $G$ is a compactly presented t.d.l.c.\! group with rational discrete cohomological dimension less or equal to one,
\item[(ii)]the group $G$ is isomorphic to the fundamental group $\pi_1(\ca{G},\Lambda)$ of a finite graph of profinite groups $(\ca{G},\Lambda)$.
\end{itemize}
\end{thmB}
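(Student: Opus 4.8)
The plan is to establish the two implications separately, drawing on Theorem~A* for the cohomological detection of splittings and on Cornulier's accessibility theorem \cite{corn:qi} to control the combinatorial side.

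For $(ii)\Rightarrow(i)$, suppose $G=\pi_1(\ca{G},\Lambda)$ with $\Lambda$ finite and all vertex and edge groups profinite. I would let $G$ act on the associated Bass--Serre tree $T$ and feed the $G$-CW structure of $T$ into its augmented cellular chain complex, obtaining a short exact sequence of discrete $\QG$-modules
$$0\longrightarrow\bigoplus_{e}\Q[G/G_e]\longrightarrow\bigoplus_{v}\Q[G/G_v]\longrightarrow\Q\longrightarrow0,$$
the sums running over the (finitely many, since $\Lambda$ is finite) orbits of edges and vertices. As each stabiliser $G_e$, $G_v$ is a compact open subgroup, every $\Q[G/G_e]$ and $\Q[G/G_v]$ is a projective object of $\QGdis$; hence this is a length-one projective resolution of $\Q$ and $\ccd(G)\leq1$. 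Compact presentation follows because a profinite group is compactly presented and the class of compactly presented t.d.l.c. groups is closed under amalgamated free products and HNN-extensions over compact open subgroups, so iterating over the finitely many edges of $\Lambda$ keeps $G$ compactly presented.

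For $(i)\Rightarrow(ii)$, I would first invoke \cite{corn:qi}: a compactly presented t.d.l.c. group is accessible, so $G=\pi_1(\ca{G},\Lambda)$ with $\Lambda$ finite, all edge groups compact open, and every vertex group $G_v$ a compactly generated open subgroup with at most one rough end (conditions (A1)--(A3)). It then suffices to prove that each $G_v$ is profinite, for then $(\ca{G},\Lambda)$ is a finite graph of profinite groups. Since $G_v$ is open in $G$, subgroup monotonicity for open subgroups gives $\ccd(G_v)\leq\ccd(G)\leq1$. Because a t.d.l.c. group has $\ccd=0$ exactly when it is profinite, it remains to exclude $\ccd(G_v)=1$. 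Here I would use that $e(G_v)\leq1$ together with Theorem~A* forces $\dH^1(G_v,\biB(G_v))=0$, and then derive a contradiction from the key lemma below.

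The crux of the argument --- and the step I expect to be the main obstacle --- is the implication
$$\ccd(H)=1\ \Longrightarrow\ \dH^1(H,\biB(H))\neq0$$
for $H$ compactly generated, the rational discrete analogue of the classical fact that $\operatorname{cd}_{\Q}G=1$ implies $\mathrm{H}^1(G,\QG)\neq0$. To prove it I would argue that, since $\ccd(H)=1$ realises the projective dimension of $\Q$ in $\QHdis$, the functor $\dExt^1_{H}(\Q,\argu)$ is right exact and nonzero; choosing a discrete module $M$ with $\dH^1(H,M)\neq0$ and a surjection onto $M$ from a direct sum $P$ of projective permutation modules $\Q[H/\caO]$, right exactness yields $\dH^1(H,P)\neq0$, whence $\dH^1(H,\Q[H/\caO])\neq0$ for some compact open $\caO$. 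Finally, compact generation gives type $\FP_1$, so $\dH^1(H,\argu)$ commutes with the direct limit \eqref{eq:biB2} defining $\biB(H)$; tracking the structure maps $\eta_{U,V}$ to see that the relevant class survives in the colimit then delivers $\dH^1(H,\biB(H))\neq0$. The delicate point is precisely this survival-in-the-colimit verification, since one must ensure the transition maps $\eta_{U,V}$ do not annihilate the nonzero first-cohomology class; once it is secured, $\ccd(G_v)=1$ contradicts $\dH^1(G_v,\biB(G_v))=0$, forcing $G_v$ profinite and completing $(i)\Rightarrow(ii)$.
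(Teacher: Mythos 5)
Your overall architecture matches the paper's: $(ii)\Rightarrow(i)$ via the Bass--Serre tree and the length-one resolution by projective permutation modules, and $(i)\Rightarrow(ii)$ via Cornulier's accessibility theorem, monotonicity of $\ccd_\Q$ for (open, hence closed) subgroups, and Theorem~A* applied to the vertex stabilizers. The one place you diverge is the ``key lemma'' $\ccd_\Q(H)=1\Rightarrow\dH^1(H,\biB(H))\neq0$: the paper does not prove this from scratch but gets it from Proposition~\ref{prop:FPbg} (quoted from \cite[Prop.~4.7]{it:ratdiscoh}), after observing that a compactly generated group with $\ccd_\Q\leq1$ is of type $\FP$ (type $\FP_1$ by Proposition~\ref{prop:fp1}, and the kernel of a finitely generated projective cover of $\Q$ is then both finitely generated and projective). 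Your attempted re-proof is sound up to and including the reduction to a single permutation module: right exactness of $\dExt^1_H(\Q,\argu)$ from $\ccd_\Q(H)=1$ is correct, and extracting a summand $\Q[H/\caO]$ with $\dH^1(H,\Q[H/\caO])\neq0$ from the direct sum is justified by Proposition~\ref{prop:fp} together with Remark~\ref{rem:fp} (the structure maps of a direct sum are injective).

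The genuine gap is the step you yourself flag: the survival of the class in the colimit $\biB(H)=\varinjlim_{\caO\in\CO(H)}\Q[H/\caO]$. You state that one ``must ensure the transition maps $\eta_{U,V}$ do not annihilate the nonzero first-cohomology class'' and then assert the conclusion ``once it is secured'' --- but you never secure it, and without it the implication is unproven, hence so is $(i)\Rightarrow(ii)$. The fix is short: for $V\subseteq U$ the map $\eta_{U,V}$ is a \emph{split} monomorphism in $\QGdis$, since composing with the canonical projection $\Q[G/V]\to\Q[G/U]$, $gV\mapsto gU$, returns $\frac{1}{|U:V|}\sum_{r\in\caR}grU=gU$; hence $\dH^1(\eta_{U,V})$ is injective (this is exactly what the paper invokes, cf.\ the proof of $(b)\Rightarrow(c)$ and of \cite[Prop.~4.7]{it:ratdiscoh}). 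Since the subgroups contained in your chosen $\caO$ are cofinal in $\CO(H)$, the nonzero class persists and $\dH^1(H,\biB(H))\neq0$. Alternatively, and more economically, you could simply cite Proposition~\ref{prop:FPbg} as the paper does, since your lemma is precisely its content in the case $\ccd_\Q=1$.
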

This result is the (compactly presented) analogue of \cite[Theorem.~1.1]{dun:acc} that characterizes the (discrete) groups of cohomological dimension at most 1 over a commutative ring $R$ to be fundamendal groups of  graphs of finite groups with no $R$-torsion.
\begin{ques} Is every compactly generated t.d.l.c. group of rational discrete cohomological dimension at most 1 isomorphic to a fundamental group of a graph of profinite groups?
\end{ques}
\subsection*{Acknowledgements:} 
The results presented in this paper are part of the author's PhD thesis. The author would like to thank Th. Weigel for raising the questions discussed in this paper and for some helpful remarks on the manuscript. The author would also like to thank Y. Cornulier for having spotted some mistakes in an earlier version of this paper. 
\section{Preliminaries on ends}\label{s:ends}
\subsection{Graphs.}\label{ss:graphs}
In this paper we use the notion of {\it graph} as introduced by J-P. Serre in~\cite{ser:trees}, i.e., a graph $\Gamma$ consists of a set $\V(\Gamma)$, a set $\E(\Gamma)$ and two maps 
\begin{center}
\begin{tabular}{ll}
$\E(\Gamma)\rightarrow \V(\Gamma)\times \V(\Gamma)$ & $\eue\mapsto (o(\eue),t(\eue)),$\\
$\E(\Gamma)\rightarrow \E(\Gamma)$ & $\eue\mapsto \bar{\eue},$ 
\end{tabular}
\end{center}
satisfying the following condition: for each $\eue\in \E(\Gamma)$ we have $\bar{\bar{\eue}}=\eue,\ \bar{\eue}\neq \eue$ and $o(\eue)=t(\bar{\eue})$. An element $v\in \V(\Gamma)$ is called a \emph{vertex} of $\Gamma$; an element $\eue\in \E(\Gamma)$ is called an (\emph{oriented}) \emph{edge} and $\bar{\eue}$ is its \emph{inverse edge}. The 2-set $\{\,\eue,\bar{\eue}\,\}$ is called a \emph{geometric edge} of $\Gamma$. The vertex $o(\eue)$ is called the \emph{origin} of $\eue$ and the vertex $t(\eue)$ is called the \emph{terminus} of $\eue$. A \emph{path} from a vertex $v$ to a vertex $w$ in $\Gamma$ is defined to be a sequence of edeges $\eu{p}=(\eue_i)_{1\leq i\leq r}$ such that $o(\eue_1)=v, t(\eue_r)=w$ and $t(\eue_i)=o(\eue_{i+1})$ for $i=1,\ldots, r-1$. A path $\eu{p}=(\eue_i)_{1\leq i\leq r}$ is said to be \emph{reduced} if $\eue_i\neq\bar{\eue}_{i+1}$ for every $i=1,\ldots,r-1$.  A reduced path $\eu{p}=(\eue_i)_{1\leq i\leq r}$ satisfying $t(\eue_r)=o(\eue_1)$ is called \emph{circuit of length $r$}, and a \emph{loop} is a circuit of length 1. A graph $\Gamma$ is said to be \emph{connected}, if there exists a path from any vertex $v$ to any other vertex $w$. Every connected subgraph of $\Gamma$ which is maximal with respect to this property is called a \emph{connected component} of $\Gamma$. Thus every graph $\Gamma$ is the disjoint union of its connected components and in this way one defines an equivalence relation $\sim$ on $\ca{V}(\Gamma)$, which is called the \emph{connectedness relation}. A connected non-empty graph without circuits is said to be a \emph{tree}.

\medskip 

For a graph $\Gamma$ we denote by $\ul{\V}(\Gamma)$ the free $\Q$-vector space $\Q[\V(\Gamma)]$ over the set of vertices. If $\Q[\E(\Gamma)]$ denotes the $\Q$-vector space over the set $\E(\Gamma)$ we put
\begin{equation}\ul{\E}(\Gamma)=\Q[\E(\Gamma)]/\spn_\Q\{\,\eue+\bar\eue\mid \eue\in \E(\Gamma)\,\}\end{equation}
the $\Q$-vector space freely generated by the geometric edges of $\Gamma$. Then one has the canonical $\Q$-linear map $\delta:\ul{\E}(\Gamma)\rightarrow\ul{\V}(\Gamma)$ given by
 \begin{equation}\label{eq:delta}
 \delta([\eue])=t(\eue)-o(\eue),\quad \eue\in \E(\Gamma),\end{equation}
 where $[\eue]$ denotes the canonical image of $\eue\in \E(\Gamma)$ in $\ul{\E}(\Gamma)$. Let $\mathrm{H}_\bullet(|\Gamma|;\Q)$ denote the singular homology groups with rational coefficients of the topological realization $|\Gamma|$ of $\Gamma$. One has the following well known result. 
\begin{fact}[{\cite[Corollary 1]{ser:trees}}]\label{fact:ses}
 Let $\Gamma$ be a graph and let $\delta:\ul{\E}(\Gamma)\to\ul{\V}(\Gamma)$ be the map given by \eqref{eq:delta}. Then\\
 
\noindent $(a)\ \ker(\delta)\cong \mathrm{H}_1(|\Gamma|;\Q)$.\\

\noindent $(b)\ \coker(\delta) \cong \Q[\V(\Gamma)/\sim]$, where $\sim$ is the connectedness relation.\\

\noindent In particular,  $\Gamma$ is a tree if, and only if, $\ker(\delta)=0$ and $\coker(\delta)\cong \Q$.
\end{fact}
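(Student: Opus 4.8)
The plan is to identify the two-term complex $0\to\ul{\E}(\Gamma)\xrightarrow{\delta}\ul{\V}(\Gamma)\to 0$ with the cellular (rational) chain complex of the topological realization $|\Gamma|$, viewed as a $1$-dimensional CW-complex whose $0$-cells are the vertices and whose $1$-cells are the geometric edges. Once this dictionary is in place, both (a) and (b) reduce to the standard computation of homology from cellular chains. Concretely, $|\Gamma|$ attaches one closed interval for each geometric edge $\{\eue,\bar\eue\}$, gluing its endpoints to $o(\eue)$ and $t(\eue)$; thus $C_0(|\Gamma|;\Q)=\Q[\V(\Gamma)]=\ul{\V}(\Gamma)$, and after orienting each $1$-cell one gets $C_1(|\Gamma|;\Q)\cong\Q[\E(\Gamma)]/\spn_\Q\{\eue+\bar\eue\}=\ul{\E}(\Gamma)$, where the relation $[\bar\eue]=-[\eue]$ is exactly the effect of reversing the orientation of a $1$-cell. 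A short check then shows that the cellular boundary $\partial_1$ carries the class of an oriented edge $\eue$ to $t(\eue)-o(\eue)$, so $\partial_1=\delta$. Since $|\Gamma|$ has no cells in dimension $\geq 2$, the cellular complex is concentrated in degrees $0$ and $1$ and computes singular homology.

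Part (a) is then immediate: with no $2$-cells present one has $\mathrm{H}_1(|\Gamma|;\Q)=\ker(\partial_1)=\ker(\delta)$. For part (b) I would use $\coker(\delta)=\coker(\partial_1)=\mathrm{H}_0(|\Gamma|;\Q)$ together with the fact that $\mathrm{H}_0$ of any space is the free $\Q$-module on its set of path-components, and that the path-components of $|\Gamma|$ are in natural bijection with the classes $\V(\Gamma)/\!\sim$ of the connectedness relation. More in the spirit of the combinatorial definition, one can instead argue directly: the assignment $v\mapsto[v]$ induces a surjection $\ul{\V}(\Gamma)\to\Q[\V(\Gamma)/\!\sim]$ whose kernel is spanned by the differences $v-w$ with $v\sim w$; a telescoping computation $\sum_i\delta([\eue_i])=t(\eue_r)-o(\eue_1)$ along a path from $w$ to $v$ shows every such difference lies in $\image(\delta)$, while the reverse inclusion $\image(\delta)\subseteq\ker(v\mapsto[v])$ holds because $o(\eue)\sim t(\eue)$. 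Hence $\coker(\delta)\cong\Q[\V(\Gamma)/\!\sim]$.

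For the tree characterization I would recall that $\Gamma$ is a tree precisely when it is connected and has no circuits. Connectedness means a single path-component, which by (b) is equivalent to $\coker(\delta)\cong\Q$. Absence of circuits is equivalent to $\mathrm{H}_1(|\Gamma|;\Q)=0$, hence by (a) to $\ker(\delta)=0$: a circuit yields a nonzero element of $\ker(\delta)$ (the alternating sum of its edge-classes), while conversely the integral $\mathrm{H}_1$ of a graph is free abelian and vanishes exactly for forests, so $\mathrm{H}_1(|\Gamma|;\Q)=\mathrm{H}_1(|\Gamma|;\Z)\otimes\Q$ is zero iff $\Gamma$ has no circuits.

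The main obstacle is not any single hard estimate but rather getting the conventions exactly right at two points: matching Serre's involution $\eue\mapsto\bar\eue$ and the quotient defining $\ul{\E}(\Gamma)$ to the orientation-reversal sign $[\bar\eue]=-[\eue]$ of cellular $1$-chains, and justifying the equivalence between $\ker(\delta)=0$ and the absence of circuits (which must be handled with care if $\Gamma$ is infinite, where one relies on the freeness of $\mathrm{H}_1$ of a graph rather than on a finite rank count). Everything else is a formal consequence of cellular homology, and no genuine computation beyond the telescoping identity is required.
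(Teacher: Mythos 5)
The paper does not prove this statement at all: it is recorded as a Fact and attributed to \cite[Corollary~1]{ser:trees}, so there is no in-paper argument to compare against. Your proof is correct and is essentially the standard one (and, up to presentation, the one in the cited source): identifying $0\to\ul{\E}(\Gamma)\xrightarrow{\delta}\ul{\V}(\Gamma)\to 0$ with the cellular chain complex of $|\Gamma|$ gives (a) and (b) at once, and the tree characterization follows; your direct telescoping argument for (b) and the appeal to freeness of $\mathrm{H}_1(|\Gamma|;\Z)$ for the ``no circuits'' equivalence correctly handle the only delicate points (infinite graphs, and the fact that a single circuit's edge sum could a priori cancel unless one passes to a circuit with distinct vertices).
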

Thus, given a connected graph $\Gamma$, one has an associated exact sequence
\begin{equation}\label{eq:exseq}
\xymatrix{0\ar[r]&\mathrm{H}_1(|\Gamma|;\Q)\ar[r]&\ul{\E}(\Gamma)\ar[r]^{\delta}&\ul{\V}(\Gamma)\ar[r]&\Q\ar[r]&0}
\end{equation}
of $\Q$-vector spaces. 
\begin{ex}\label{ex:ses2}  Let $G$ be a t.d.l.c. group, and let $\QGdis$ denote  the abelian category whose objects are the discrete left $\QG$-modules (i.e., left $\QG$-modules such that the pointwise stabilizers are open subgroups of $G$).\\
(a) Suppose there exist open subgroups $H,K$ and  $J$ such that $G=H\ast_K J$, i.e., $G$ splits as free product with amalgamation in $K$. The group $G$ is then acting discretely - i.e. with open vertex stabilizers - without edge inversions on a tree with a segment as fundamental domain (cf.~\cite[Theorem 6]{ser:trees}). By applying the orbit-stabilizer theorem, the exact sequence \eqref{eq:exseq} yields
$$\xymatrix{0\ar[r]&\Q[G/K]\ar[r]^-{\delta}&\Q[G/H]\oplus\Q[G/J]\ar[r]&\Q\ar[r]&0},$$
which is a short exact sequence in $\QGdis$.\\
(b) Suppose $G=H\ast_K^t$ is an HNN-extension with stable letter $t$, where $H,K$ are open subgroups of  $G$. Thus $G$ is acting discretely and without edge inversions on a tree with a loop as fundamental domain (cf.~\cite[Remark 1, pg. 34]{ser:trees}). Thus one has the following short exact sequence in $\QGdis$
$$\xymatrix{0\ar[r]&\Q[G/K]\ar[r]^-{\delta}&\Q[G/H]\ar[r]&\Q\ar[r]&0.}$$
\end{ex}
\subsection{The number of rough ends}\label{ss:rough ends}  
A graph $\Gamma$ is said to be \emph{locally finite} if the set $$\mathrm{star}_\Gamma(v)=\{\eue\in \E(\Gamma)|o(\eue)=v\}$$ is finite for every $v\in\V(\Gamma)$. From now on $\Gamma$ will be a connected locally finite graph. For a finite subset $S\subseteq\V(\Gamma)$ let $E_S(\Gamma)=\{\,\eue\in \E(\Gamma)\mid o(\eue)\in S\,\}$, i.e., the union of all $\mathrm{star}_\Gamma(v),\,v\in S$. We denote  by $\Gamma - S$ the subgraph of $\Gamma$ with vertex set $\V(\Gamma)-S$ and  edge set $\E(\Gamma)-(E_S(\Gamma)\cup \overline{E_S(\Gamma)})$, i.e., $\Gamma-S$ is the subgraph obtained from $\Gamma$ by removing $S$ and all the edges attached to $S$. Let $c_S$ be the number of infinite connected components of $\Gamma-S$. For a connected locally finite graph~$\Gamma$
\begin{equation}
e(\Gamma)=\sup\{\,c_S\mid S\subset \V(\Gamma)\ \mbox{finite}\,\}
\end{equation}
will be called the \emph{number of ends} of $\Gamma$. In particular, the graph $\Gamma$ is finite if, and only if, $\Gamma$ is 0-ended.
\begin{fact}\label{fact:cut_end} The number $\E(\Gamma)$ is greater than one if, and only if, there exists an infinite connected subgraph $\ca{C}\subset \Gamma$ such that the set
\begin{equation}\label{eq:cobound}
\delta \ca{C}=\{\,\eue\in \E(\Gamma)\mid \mbox{either}\ o(\eue)\in V(\ca{C})\ \mbox{or}\ t(\eue)\in V(\ca{C})\ \mbox{but not both}\,\}
\end{equation}
is finite and the subgraph $\ca{C}^*=\Gamma- V(\ca{C})$ contains an infinite connected component. 
\end{fact}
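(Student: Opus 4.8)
The plan is to prove both implications by translating between the two ways of witnessing several ends: a finite vertex set $S$ whose removal leaves at least two infinite components, and a single infinite subgraph $\ca{C}$ with finite coboundary $\delta\ca{C}$ whose complement stays infinite. The bridge in both directions is the elementary observation that deleting a finite set of vertices from an infinite, connected, locally finite graph always leaves at least one infinite connected component. I would isolate this as a preliminary remark and prove it via König's lemma: such a graph contains a ray $v_0,v_1,v_2,\dots$ of pairwise distinct vertices with $v_i$ adjacent to $v_{i+1}$, the ray meets any finite set only finitely often, and hence a tail of it is a connected infinite subgraph lying entirely outside the deleted set; that tail forces the component containing it to be infinite.

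For the implication $e(\Gamma)>1\Rightarrow$ existence of $\ca{C}$: by the definition of $e(\Gamma)$ there is a finite $S\subset\V(\Gamma)$ with $c_S\geq 2$, so $\Gamma-S$ has two distinct infinite connected components, say $\ca{C}$ and $\ca{C}'$. I would take this $\ca{C}$ as the required subgraph, which is infinite and connected by construction. To see that $\delta\ca{C}$ is finite, note that any edge $\eue\in\E(\Gamma)$ with exactly one endpoint in $\V(\ca{C})$ must have its other endpoint in $S$: otherwise both endpoints would lie in $\Gamma-S$, and $\eue$ would join $\ca{C}$ to a vertex outside it inside $\Gamma-S$, contradicting that $\ca{C}$ is a full connected component of $\Gamma-S$. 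Thus $\delta\ca{C}$ is contained in the set of edges incident to $S$, which is finite because $\Gamma$ is locally finite and $S$ is finite. Finally, $\ca{C}^*=\Gamma-\V(\ca{C})$ contains the infinite connected subgraph $\ca{C}'$, and so it has an infinite connected component.

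For the converse, given $\ca{C}$ as in the statement I would set $S$ to be the finite set of all endpoints of the edges in $\delta\ca{C}$. The key claim is that in $\Gamma-S$ no edge joins $\V(\ca{C})-S$ to its complement: any such edge would lie in $\delta\ca{C}$ by definition, forcing its endpoint in $\V(\ca{C})$ into $S$, a contradiction. Consequently every connected component of $\Gamma-S$ is contained either in $\V(\ca{C})-S$ or in $\V(\ca{C}^*)-S$. Applying the preliminary observation to the infinite connected graph $\ca{C}$, with the finite set $S\cap\V(\ca{C})$ removed, yields an infinite component inside $\V(\ca{C})-S$; applying it to an infinite connected component of $\ca{C}^*$ yields an infinite component inside $\V(\ca{C}^*)-S$. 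These two components have disjoint vertex sets and are therefore distinct, so $c_S\geq 2$ and $e(\Gamma)>1$. The only genuinely non-formal point, and the step I expect to be the main obstacle, is the preliminary observation that a finite deletion cannot destroy all ends; the remainder is bookkeeping about which edges survive in $\Gamma-S$, where the local finiteness hypothesis is precisely what keeps the relevant edge sets finite.
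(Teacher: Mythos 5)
Your argument is correct and complete. The paper offers no proof of this statement at all --- it is recorded as a \emph{Fact}, i.e.\ as a standard result from the theory of ends of locally finite graphs --- so there is nothing in the text to compare against. Your proof is the expected one: both implications reduce to the K\H{o}nig's-lemma observation that deleting finitely many vertices from an infinite connected locally finite graph leaves an infinite component, combined with the two bookkeeping steps that (i) every edge of $\delta\ca{C}$ leaving a component $\ca{C}$ of $\Gamma-S$ must end in $S$, so $\delta\ca{C}$ is finite by local finiteness, and (ii) conversely the finitely many endpoints of $\delta\ca{C}$ form a finite separator $S$ with $c_S\geq 2$. The only cosmetic point worth flagging is that the statement's ``$\E(\Gamma)$'' is evidently a typo for $e(\Gamma)$, which you have interpreted correctly.
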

The set of vertices $C=V(\ca{C})$ is called a \emph{cut} of $\Gamma$.

\medskip

Recall that two connected graphs $(\Gamma,d_{\Gamma})$ and $(\Gamma',d_{\Gamma'})$ (with the geo\-desic metric) are said to be \emph{quasi-isometric} if there exist a map $\varphi:\V(\Gamma)\rightarrow V(\Gamma')$ and constants $a\geq 1$ and $b>0$ such that for all vertices $v,w\in \V(\Gamma)$
\begin{equation}
a^{-1}d_{\Gamma}(v,w)-a^{-1}b\leq d_{\Gamma'}(\varphi(v),\varphi(w))\leq a\, d_{\Gamma}(v,w),
\end{equation}
and for all vertices $v'\in V(\Gamma')$ one has
\begin{equation}\label{eq:qi2} 
d_{\Gamma}(v',\varphi(\V(\Gamma)))\leq b.
\end{equation}
A map $\varphi$ satisfying the above conditions is called a \emph{quasi-isometry} of graphs. Moreover, the relation of being quasi-isometric is an equivalence relation among graphs and the number of ends is a quasi-isometric invariant (cf. {\cite[Proposition 1]{moller:endsII}}).

\medskip

A t.d.l.c.\! group $G$ is said to be {\it compactly generated } if there exist a compact open subgroup $K$ and a finite symmetric set $S\subset G\setminus K$ such that $G$ is algebraically generated by $S\cup K$. Every such a pair $(K,S)$ will be called a \emph{generating pair} of $G$. The \emph{rough Cayley graph} $\Gamma$ associated to $G$ with respect to the generating pair $(K,S)$ consists of the following data:
\begin{equation} 
\V(\Gamma)=G/K,\quad \E(\Gamma)=\{\,(gK,gsK), (gsK,gK)\mid g\in G,s\in S\,\},
\end{equation}
where the origin and terminus maps are given by projection onto the first and second coordinate, respectively, while the edge inversion mapping permutes the first and second coordinate. 
\begin{rem} In literature these graphs are also known as Cayley-Abels graphs. The definition we have chosen here follows the approach used in \cite[\S2]{kronmoller:rough}, with the difference that the edges of a graph are directed in our setup.
\end{rem}
A rough Cayley graph $\Gamma$ is naturally endowed with a discrete $G$-action, i.e., $G$ is acting with open stabilizers. Moreover, the following fact holds.
\begin{fact}\label{fact:rcg_prop} 
Let $G$ be a compactly generated t.d.l.c. group. Then
\begin{enumerate}
\item[(a)] every rough Cayley graph $\Gamma$ of $G$ is a vertex-transitive, connected and locally finite graph;
\item[(b)]  $G$ has a continuous, proper and cocompact $G$-action on $\Gamma$;
\item[(c)] all rough Cayley graphs of $G$ are quasi-isometric;
\item[(d)] all rough Cayley graphs of $G$ have the same number of ends.
\end{enumerate}
\end{fact}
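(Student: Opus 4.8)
The plan is to verify (a) and (b) directly from the definition of the rough Cayley graph, then to deduce (c) from a Švarc--Milnor type argument resting on (b), and finally to obtain (d) as an immediate consequence of (c) together with the quasi-isometry invariance of the number of ends recorded in \cite[Proposition~1]{moller:endsII}.

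For (a), I first note that $G$ acts on $\V(\Gamma)=G/K$ by left translation, $h\cdot gK=hgK$, and that this carries $(gK,gsK)$ to $(hgK,hgsK)$, hence permutes $\E(\Gamma)$; transitivity on vertices is clear since $g_2g_1^{-1}$ sends $g_1K$ to $g_2K$, giving vertex-transitivity. Connectedness follows from $G=\langle S\cup K\rangle$: writing an arbitrary $g$ in alternating form $g=k_0s_1k_1\cdots s_mk_m$ with $k_i\in K$ and $s_i\in S$, the vertices $K,\ k_0s_1K,\ k_0s_1k_1s_2K,\dots$ are successively joined by edges of the type $(g'K,g's_iK)$, and the terminal vertex equals $gK$ because each $k_i$ fixes the base coset. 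For local finiteness it suffices, by vertex-transitivity, to bound $\mathrm{star}_\Gamma(K)$; its edges terminate at cosets lying in $\bigcup_{s\in S}KsK$, and since $K$ is compact open each double coset $KsK$ is a finite union of left cosets of $K$ (the index $[K:K\cap sKs^{-1}]$ being finite), so finiteness of $S$ forces the star to be finite.

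For (b), continuity of $G\times G/K\to G/K$ is automatic: $G/K$ carries the discrete topology and the stabilizer $gKg^{-1}$ of each vertex $gK$ is open. These stabilizers are moreover compact, being conjugates of the compact group $K$, which is precisely properness of the action on the locally finite graph $\Gamma$. Cocompactness amounts to $G$ having finitely many orbits on $\V(\Gamma)\sqcup\E(\Gamma)$: the vertex action is transitive, and every edge is $G$-equivalent to one of the edges $(K,sK)$, $s\in S$ (using that $S$ is symmetric, so that $(gsK,gK)$ is equivalent to $(K,s^{-1}K)$), whence there are at most $\abs{S}$ edge orbits.

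The substantive point is (c). Two rough Cayley graphs $\Gamma,\Gamma'$ of $G$, for generating pairs $(K,S)$ and $(K',S')$, both carry continuous proper cocompact $G$-actions by (b), so I would invoke the Švarc--Milnor lemma in its form for locally compact groups to conclude that each of $\Gamma,\Gamma'$ is quasi-isometric to $G$ equipped with the word metric of a compact generating set, hence to one another. I expect this to be the main obstacle, both because the lemma must be used in its non-discrete version and because one must reconcile the two distinct compact open subgroups. The cleanest concrete route is to pass to $K\cap K'$, again compact open and of finite index in each of $K,K'$, and to compare the three associated graphs pairwise through the coset-collapsing maps $G/(K\cap K')\to G/K$ and $G/(K\cap K')\to G/K'$, verifying that these surjections have uniformly bounded fibres and satisfy the two-sided Lipschitz bounds of a quasi-isometry from the finiteness of the relevant indices. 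Granting (c), part (d) is immediate: quasi-isometric connected locally finite graphs have the same number of ends by \cite[Proposition~1]{moller:endsII}, so all rough Cayley graphs of $G$ share a single value of $e(\Gamma)$.
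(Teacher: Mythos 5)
The paper offers no proof of this Fact at all: it is stated as known and implicitly attributed to Kr\"on--M\"oller (\cite[\S 2]{kronmoller:rough}), whose Cayley--Abels graphs it reproduces up to the directed-edge convention. Your write-up is a correct reconstruction of the standard argument, and it follows the same logical skeleton one finds in the literature: direct verification of (a) and (b), quasi-isometry for (c), and then (d) from \cite[Proposition~1]{moller:endsII}, which is exactly the reference the paper itself invokes for quasi-isometry invariance of ends. Parts (a), (b) and (d) are complete as written; in particular the local finiteness via $[K:K\cap sKs^{-1}]<\infty$ and the count of at most $\abs{S}$ edge orbits are the right observations. The only place still at the level of a plan is (c), where your concrete route (pass to $K\cap K'$ and compare via the coset-collapsing maps) is the standard and correct one, but two small points deserve to be made explicit: first, that $(K\cap K',S'')$ is again a generating pair for a suitable finite symmetric $S''$ --- this uses that $K\cap K'$ has finite index in the compact group $K$, so $K$ is generated by $K\cap K'$ together with finitely many coset representatives; second, that in bounding distances one must control \emph{all} preimages $gk(K\cap K')$, $k\in K$, of a vertex $gK$, which again works because $k$ ranges over finitely many cosets of $K\cap K'$, each representative having bounded word length. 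The \v{S}varc--Milnor alternative you mention also works for locally compact groups with continuous proper cocompact actions, but the elementary coset-collapsing argument is closer to what Kr\"on and M\"oller actually do and avoids importing the non-discrete version of that lemma.
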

Thus the \emph{number of rough ends}  $e(G)$ of  a compactly generated t.d.l.c. group $G$ can be defined to be the number of ends of a rough Cayley graph $\Gamma$ associated to $G$ with respect to some generating pair $(K,S)$. 

\begin{ex} (a) If $G$ is a finitely generated discrete group, then the notion of rough Cayley graph gives back the well-known notion of Cayley graph and its number of ends. E.g. $\Z$ and $\mathrm{D}_\infty$ are 2-ended groups. 

(b)
The group $SL_2(\Q_p)$ is a free product with amalgamation of two copies of $SL_2(\Z_p)$. Hence $SL_2(\Q_p)$ has infinitely many rough ends.
\end{ex}
\section{First degree cohomology}
\subsection{Rational discrete cohomology}\label{ss:ratdis} Here we collect some of the properties concerning the rational discrete cohomology for t.d.l.c.\! groups we shall use further on. For the details the reader is referred to \cite{it:ratdiscoh}.

For a t.d.l.c.\! group $G$, let $\QGdis$ denote the abelian full subcategory of $\QGmod$ whose objects are the discrete left $\QG$-modules, i.e., left $\QG$-modules with open stabilizers. The category $\QGdis$ has enough injectives, thus one may define
\begin{equation}
\label{eq:dExt}
\dExt^k_{G}(M,\argu)=\caR^k\Hom_{\QGdis}(M,\argu)
\end{equation}
the right derived functors of $\Hom_{\QG}(M,\argu)$ in $\QGdis$, and the {\it $k^{th}$ discrete cohomology group} of $G$
with coefficients in $\QGdis$ by
\begin{equation}
\label{eq:dExt3}
\dH^k(G,\argu)=\dExt_{G}^k(\Q,\argu),\qquad k\geq 0,
\end{equation}
where $\Q$ denotes the trivial discrete left $\QG$-module.

By using Maschke's theorem, one may prove that the trivial $\QG$-module $\Q$ is projective whenever $G$ is profinite. Consequently, for every t.d.l.c.\! group $G$, the discrete left $\QG$-module $\Q[G/K]$ is projective in $\QGdis$ whenever $K$ is a compact open subgroup of $G$. Moreover, one may stress further this property as follows.

Let $\Omega$ be a left $G$-set with open point-wise stabilizers. Clearly, $\Q[\Omega]$ - the free $\Q$-vector space over the set $\Omega$ - is a discrete left $\QG$-module, which is also called a {\it discrete left $\QG$-permutation module}. 
\begin{prop}[\protect{\cite[Prop. 3.2]{it:ratdiscoh}}]
\label{prop:permod}
Let $G$ be a t.d.l.c. group, and let $\Omega$ be a left $G$-set with compact open stabilizers.
Then $\Q[\Omega]$ is projective in $\QGdis$. 
In particular, the abelian category $\QGdis$ has enough projectives.
\end{prop}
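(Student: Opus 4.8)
The plan is to reduce the projectivity of $\Q[\Omega]$ to the already-established projectivity of the modules $\Q[G/K]$ with $K$ compact open, and then assemble the orbits by a direct-sum argument. First I would decompose $\Omega$ into its $G$-orbits, writing $\Omega=\bigsqcup_{i\in I}\Omega_i$. Because the point stabilizers are compact open, choosing a base point $\omega_i\in\Omega_i$ produces a $G$-equivariant bijection $\Omega_i\cong G/K_i$ with $K_i=\stab_G(\omega_i)$ a compact open subgroup of $G$. Applying the free-$\Q$-module functor $\Q[\argu]$, which carries disjoint unions of $G$-sets to direct sums of $\QG$-modules, then yields an isomorphism of discrete $\QG$-modules
$$\Q[\Omega]\;\cong\;\bigoplus_{i\in I}\Q[G/K_i],$$
and each summand on the right is projective in $\QGdis$ by the property recalled immediately before the statement.

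It remains to see that an arbitrary direct sum of projectives stays projective in $\QGdis$. The point I would check most carefully — and the step I expect to require the most care, since $\QGdis$ is only a full subcategory of $\QGmod$ — is that $\QGdis$ is closed under arbitrary direct sums and that the coproduct there is computed as the underlying $\QG$-module direct sum. Concretely, if the $M_i$ are discrete, then any element of $\bigoplus_i M_i$ has only finitely many nonzero components, each fixed by a compact open subgroup; since a finite intersection of compact open subgroups is again compact open, the whole element has open stabilizer, and so $\bigoplus_i M_i$ is discrete. Hence the coproduct of the $\Q[G/K_i]$ formed in $\QGmod$ already lies in $\QGdis$ and serves as their coproduct there.

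Granting this, projectivity follows formally: for any object $N$ of $\QGdis$ one has a natural isomorphism $\Hom_{\QGdis}(\bigoplus_i\Q[G/K_i],N)\cong\prod_i\Hom_{\QGdis}(\Q[G/K_i],N)$. Since each factor $\Hom_{\QGdis}(\Q[G/K_i],\argu)$ is exact (projectivity of $\Q[G/K_i]$), and an arbitrary product of exact sequences of $\Q$-vector spaces is exact, the functor $\Hom_{\QGdis}(\Q[\Omega],\argu)$ is exact, which is exactly the projectivity of $\Q[\Omega]$.

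Finally, for the assertion that $\QGdis$ has enough projectives, I would produce for an arbitrary discrete module $M$ a projective surjection from a permutation module. For each $m\in M$, discreteness supplies a compact open subgroup $K_m$ fixing $m$, hence a well-defined $\QG$-linear map $\Q[G/K_m]\to M$ sending $gK_m\mapsto gm$; its image contains $m$. Summing over all $m\in M$ gives a map $\bigoplus_{m\in M}\Q[G/K_m]\to M$ whose image is a submodule containing every element of $M$, hence equal to $M$. The domain is $\Q[\Omega]$ for the $G$-set $\Omega=\bigsqcup_{m\in M}G/K_m$ with compact open stabilizers, so it is projective by the first part, and we obtain the desired projective cover of $M$.
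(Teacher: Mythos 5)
Your argument is correct and is exactly the standard reduction the paper intends: the proposition is quoted from \cite[Prop.~3.2]{it:ratdiscoh} without a proof in this paper, but the preceding paragraph supplies the key input (projectivity of $\Q[G/K]$ for $K$ compact open, via Maschke's theorem), and your orbit decomposition together with the closure of $\QGdis$ under direct sums completes it in the expected way. The only implicit step worth flagging is that passing from the open stabilizer of an element $m$ to a compact open subgroup $K_m$ contained in it uses van Dantzig's theorem, which the paper invokes elsewhere; and ``projective cover'' at the end should read ``projective presentation,'' since no minimality is claimed.
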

The existence of projective resolutions in $\QGdis$ naturally leads to several finiteness conditions on $G$ as usual. Firstly, the \emph{rational discrete cohomological dimension} of $G$, denoted by $\ccd_{\Q}(G)$, is defined to be the smallest non-negative integer $n$ such that there exists a projective resolution $(P_i,\der_i)$ of $\Q$ in $\QGdis$ of length $\leq n$. Analogously to the discrete case, one has the following properties.
\begin{prop}[\protect{\cite[Prop. 3.7]{it:ratdiscoh}}]\label{prop:cd}
Let $G$ be a t.d.l.c.\! group.
\begin{itemize}
\item[(a)] $G$ is compact if, and only if, $\ccd_{\Q}(G)=0.$
\item[(b)] If $H$ is a closed subgroup of $G$, then
$$\ccd_{\Q}(H)\leq\ccd_{\Q}(G).$$
\end{itemize}
\end{prop}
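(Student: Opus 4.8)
The plan is to treat the two statements separately, with Proposition~\ref{prop:permod} serving as the common engine in each.

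\textbf{Part (a).} The forward implication is immediate: a compact t.d.l.c.\ group is profinite, and we have already recorded (via Maschke's theorem) that the trivial module $\Q$ is projective in $\QGdis$ for $G$ profinite; then $0\to\Q\xrightarrow{\iid}\Q\to 0$ is a projective resolution of $\Q$ of length $0$, so $\ccd_\Q(G)=0$. For the converse, suppose $\ccd_\Q(G)=0$, i.e.\ $\Q$ is projective. Using van Dantzig's theorem, fix a compact open subgroup $K\leq G$ and consider the augmentation $\varepsilon\colon\Q[G/K]\to\Q$, $gK\mapsto 1$, which is a surjection in $\QGdis$. Projectivity of $\Q$ splits $\varepsilon$, producing a $G$-invariant vector $v\in\Q[G/K]$ with $\varepsilon(v)=1$. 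Since $G$ acts transitively on $G/K$, invariance forces the coefficient function of $v$ to be constant; as elements of $\Q[G/K]$ are finitely supported, a nonzero such $v$ can exist only when $G/K$ is finite, whence $G$ is a finite union of cosets of a compact set and is therefore compact. The one delicate point is this finite-support observation, which rules out the ``constant'' invariant vector unless $[G:K]<\infty$.

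\textbf{Part (b).} The strategy is to restrict a projective resolution from $G$ to $H$ along $\rst^G_H\colon\QGdis\to\QHdis$. First I check $\rst^G_H$ is well defined: if $M$ has open $G$-stabilizers then for $m\in M$ one has $\mathrm{Stab}_H(m)=\mathrm{Stab}_G(m)\cap H$, which is open in $H$, so $\rst^G_H M$ is discrete over $H$. As a forgetful functor on underlying $\Q$-vector spaces, $\rst^G_H$ is exact and satisfies $\rst^G_H\Q=\Q$. The heart of the argument is that $\rst^G_H$ preserves projectives. By Proposition~\ref{prop:permod} every projective in $\QGdis$ is a direct summand of a direct sum of permutation modules $\Q[G/K]$ with $K$ compact open, so it suffices to show $\rst^G_H\,\Q[G/K]$ is projective in $\QHdis$. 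Decomposing the $H$-set $G/K$ into $H$-orbits, the orbit through $gK$ is isomorphic to $H/(H\cap gKg^{-1})$; since $gKg^{-1}$ is compact open in $G$ and $H$ is closed, the stabilizer $H\cap gKg^{-1}$ is compact open in $H$. Hence $\rst^G_H\,\Q[G/K]\cong\bigoplus H/(H\cap gKg^{-1})$-permutation modules with compact open stabilizers, which is projective in $\QHdis$ again by Proposition~\ref{prop:permod}.

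With these three properties in hand the conclusion is routine: choosing a projective resolution $(P_\bullet,\der_\bullet)\to\Q$ in $\QGdis$ of length $n=\ccd_\Q(G)$ and applying $\rst^G_H$ yields an exact complex $(\rst^G_H P_\bullet)\to\Q$ of projective $\QHdis$-modules of length $\leq n$, that is, a projective resolution of $\Q$ over $H$; therefore $\ccd_\Q(H)\leq n=\ccd_\Q(G)$. I expect the main obstacle to be verifying that $\rst^G_H$ preserves projectivity---specifically the $H$-orbit decomposition of $G/K$ and the compact-openness of $H\cap gKg^{-1}$---while exactness and the identity $\rst^G_H\Q=\Q$ are formal.
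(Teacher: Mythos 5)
The paper does not prove this proposition itself---it is imported verbatim from \cite[Prop.~3.7]{it:ratdiscoh}---and your argument is correct and is the standard one used there: for (a), splitting the augmentation $\Q[G/K]\to\Q$ and using finite support of an invariant vector; for (b), restricting a projective resolution, with the key points being that $H\cap gKg^{-1}$ is compact open in $H$ precisely because $H$ is \emph{closed}, and that Proposition~\ref{prop:permod} identifies the resulting $H$-permutation modules as projective. No gaps.
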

Moreover, a discrete left $\QG$-module $M$ is said to be {\it finitely generated}, if there exist a finite number of compact open subgroups
$K_1,\ldots,K_n$ of $G$ and an epimorphism
$\pi\colon\coprod_{1\leq j\leq n} \Q[G/K_j]\to M$. Consequently, a discrete left $\QG$-module $M$ is said to be {\it of type $\FP_n$}, $n\geq 0$, if $M$
satisfies one of the following equivalent properties:
\begin{itemize}
\item[(F1)]  there is a partial projective resolution 
$$\xymatrix{
P_n\ar[r]^{\der_n}&P_{n-1}\ar[r]^{\der_{n-1}}&\ldots\ar[r]^{\der_2}&P_1\ar[r]^{\der_1}
&P_0\ar[r]^{\eps}&M\ar[r]&0}$$
of $M$ in $\QGdis$ such that $P_j$ is finitely generated for all $0\leq j\leq n$;
\item[(F2)]  $M$ is finitely generated and for every partial projective resolution 
$$\xymatrix{
Q_k\ar[r]^{\eth_k}&Q_{k-1}\ar[r]^{\eth_{k-1}}&\ldots\ar[r]&Q_1\ar[r]
&Q_0\ar[r]&M\ar[r]&0}$$
in $\QGdis$ with $k<n$ such that $Q_j$ is finitely generated for all $j=0,\ldots,k$, one has that $\ker(\eth_k)$ is finitely generated.
\end{itemize}
E.g.,  $M$ is of type $\FP_0$ if, and only if, $M$ is finitely generated.
If $M$ is of type $\FP_n$ for all $n\geq 0$, then $M$ is called to be {\it of type
$\FP_\infty$}. Accordingly, the group $G$ is said to be of type $\FP_n$, $n\in\N\cup\{\infty\}$,
if the trivial module $\Q$ is of type $\FP_n$ in $\QGdis$.
\begin{prop}\label{prop:fp}
Let $G$ be a t.d.l.c.\! group and $A\in ob(\QGdis)$ of type $\FP_n$, $n\geq 0$. Then for every direct limit $\varinjlim M_\bullet$ in $\QGdis$ the natural homomorphism
$$\varinjlim\dExt_G^k(A,M_\bullet)\to\dExt_G^k(A,\varinjlim M_\bullet),$$ 
is an isomorphism for $k\leq n-1$ and a monomorphism for $k=n$.
\end{prop}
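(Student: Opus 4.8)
The plan is to compute both sides from a single projective resolution of $A$ and to reduce everything to the behaviour of $\Hom_{\QG}(\Q[G/K],\argu)$ under direct limits for $K\in\CO(G)$. Since $A$ is of type $\FP_n$, property (F1) yields a projective resolution $(P_\bullet,\der_\bullet)$ of $A$ in $\QGdis$ with $P_0,\dots,P_n$ finitely generated; I extend it arbitrarily in higher degrees. By the definition of finite generation, a finitely generated projective object is a direct summand of a finite sum $\bigoplus_{i=1}^m\Q[G/K_i]$ with $K_i\in\CO(G)$ (the defining epimorphism from such a sum splits by projectivity). Thus it suffices to understand the permutation modules $\Q[G/K]$.

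The heart of the matter --- and the only step where the topology of $G$ is genuinely used --- is the claim that the fixed-point functor $(\argu)^{K}$ commutes with direct limits on $\QGdis$; here I use the natural identification $\Hom_{\QG}(\Q[G/K],M)\cong M^{K}$, valid because $\Q[G/K]$ is generated over $\QG$ by the coset $eK$, whose stabiliser is $K$. I expect this to be the main obstacle. Given $x\in(\varinjlim M_i)^{K}$ represented by $m_j\in M_j$, discreteness of $M_j$ makes $\mathrm{Stab}_G(m_j)$ open, so $\mathrm{Stab}_K(m_j)$ has finite index in the compact group $K$ and the $K$-orbit of $m_j$ in $M_j$ is finite. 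Since the system is directed, one may then pass to a single later stage $l\geq j$ at which all the finitely many orbit elements that are to be identified in the colimit are already equal, so that the image of $m_j$ in $M_l$ is genuinely $K$-fixed; this gives surjectivity of $\varinjlim M_i^{K}\to(\varinjlim M_i)^{K}$, and injectivity is immediate. Passing to finite direct sums and then to direct summands, $\Hom_{\QG}(P_j,\argu)$ commutes with direct limits for every $0\leq j\leq n$.

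The remainder is formal. As direct limits are exact in $\QGdis$ (they are computed as in $\QGmod$, the colimit of discrete modules being discrete), one has $\varinjlim\dExt_G^k(A,M_\bullet)\cong \mathrm{H}^k\!\big(\varinjlim\Hom_{\QG}(P_\bullet,M_\bullet)\big)$, while $\dExt_G^k(A,\varinjlim M_\bullet)=\mathrm{H}^k\!\big(\Hom_{\QG}(P_\bullet,\varinjlim M_\bullet)\big)$, and the natural cochain map $\varinjlim\Hom_{\QG}(P_\bullet,M_\bullet)\to\Hom_{\QG}(P_\bullet,\varinjlim M_\bullet)$ is an isomorphism in every degree $\leq n$ by the preceding paragraph. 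I would conclude with the elementary fact that a morphism $\phi$ of cochain complexes which is an isomorphism in all degrees $\leq n$ induces an isomorphism on $\mathrm{H}^k$ for $k\leq n-1$ and a monomorphism on $\mathrm{H}^n$: for $k\leq n-1$ the three consecutive terms in degrees $k-1,k,k+1$ are matched isomorphically, hence so is $\mathrm{H}^k$; for $k=n$ the chain-map identity shows $\phi^n$ carries $\ker d^n$ into $\ker d^n$, and since the coboundaries in degree $n$ correspond under the isomorphisms $\phi^{n-1},\phi^n$, the induced map on $\mathrm{H}^n$ is injective. Transporting these statements back through the isomorphisms above yields precisely the asserted isomorphism for $k\leq n-1$ and monomorphism for $k=n$.
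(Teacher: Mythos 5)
Your proof is correct and follows essentially the same route as the paper: pass to a projective resolution of $A$ that is finitely generated in degrees $\leq n$, show that $\Hom_{\QG}(P_j,\argu)$ commutes with direct limits for $0\leq j\leq n$, and conclude by the standard cochain-level comparison (the argument of \cite[Prop.~1.2]{bieri}, which the paper simply cites and which you spell out). The only divergence is that where the paper invokes the $\Hom-\otimes$ identity of \cite[\S 4.3]{it:ratdiscoh} to obtain the commutation with direct limits, you prove it directly by reducing to summands of finite sums of $\Q[G/K]$ and using that discreteness forces $K$-orbits to be finite, so a $K$-fixed element of the colimit is already fixed at some finite stage; this step is carried out correctly.
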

\begin{proof}
 Let $(P_\bullet,\partial_\bullet,\epsilon)$ be a projective resolution of $\Q$ in $\QGdis$ such that $P_j$ is finitely generated for $0\leq j\leq n$. By the $\Hom-\otimes$ identity provided in \cite[\S 4.3]{it:ratdiscoh}, $\Hom_G(P_j,\argu)$ commutes with direct limits whenever $0\leq j\leq n$. Thus the proof of \cite[Prop. 1.2]{bieri} can be transferred here.
\end{proof}
\begin{rem}\label{rem:fp} If all of the canonical maps $M_\bullet\to\varinjlim M_\bullet$ are injective, then an easy diagram chasing shows that $\varinjlim\dExt_G^n(A,M_\bullet)\to\dExt^n(A,\varinjlim M_\bullet)$ is an isomorpshism as well.
\end{rem}
\begin{cor}\label{cor:fpinf} For a t.d.l.c.\! group $G$ of type $\FP_\infty$ the functors $\dH^\bullet(G,\argu)$ commute with direct limits in $\QGdis$.
\end{cor}
It is well known that a discrete group is fini\-te\-ly generated if, and only if, it is of type $\FP_1$ (cf. \cite[\S VIII.4]{brown:coh}). The analogue result for t.d.l.c.\! groups holds as well.
\begin{prop}[\protect{\cite[Prop. 5.3]{it:ratdiscoh}}]\label{prop:fp1}
Let $G$ be a t.d.l.c.\! group. Then $G$ is compactly generated if, and only if, $G$ is of type $FP_1$.
\end{prop}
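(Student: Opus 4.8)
The plan is to reduce both implications to a statement about the augmentation ideal of a single permutation module, in close analogy with the discrete fact that $G$ is finitely generated if and only if the augmentation ideal of the group ring is finitely generated. The crucial adaptation is that $\QG$ itself does not lie in $\QGdis$: its point stabilizers are trivial, hence non-open unless $G$ is discrete. The correct replacement is $\Q[G/K]$ for a compact open subgroup $K$, which exists by van Dantzig's theorem and is projective by Proposition~\ref{prop:permod}. Fix such a $K$, let $\eps\colon\Q[G/K]\to\Q$ be the augmentation, and set $I=\ker(\eps)$, spanned over $\Q$ by the elements $gK-K$, $g\in G$. Since $\Q$ is automatically finitely generated (it is a quotient of $\Q[G/K]$) and $\Q[G/K]$ is finitely generated projective, the defining property (F2) shows that $G$ is of type $\FP_1$ if and only if $I$ is finitely generated in $\QGdis$.

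For the forward direction, given a generating pair $(K,S)$, I would show that $I$ is generated as a $\QG$-module by the finite set $\{\,sK-K\mid s\in S\,\}$. Writing an arbitrary $g\in G$ as a word $s_{i_1}\cdots s_{i_r}k$ in $S\cup K$ and telescoping,
$$gK-K=\sum_{j=1}^{r}s_{i_1}\cdots s_{i_{j-1}}\,(s_{i_j}K-K),$$
exhibits $gK-K$ inside the submodule generated by these elements. Since $s\notin K$, the stabilizer of $sK-K$ is $K\cap sKs^{-1}$, which is compact open, so $I$ is a quotient of $\bigoplus_{s\in S}\Q[G/(K\cap sKs^{-1})]$ and hence finitely generated; thus $G$ is of type $\FP_1$.

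The reverse direction is where the work lies. Assume $I$ is finitely generated, say by $w_1,\dots,w_m$, each a finite $\Q$-combination of cosets $g_{jl}K$. Let $T$ be the finite set of all the elements $g_{jl}$ and put $H=\langle K\cup T\rangle$, an open (hence closed) compactly generated subgroup. To prove $H=G$ I would use the $G$-equivariant quotient map $\pi\colon\Q[G/K]\to\Q[G/H]$, $gK\mapsto gH$, which is well defined because $K\subseteq H$. Each $w_j$ involves only cosets $g_{jl}K$ with $g_{jl}\in H$, so $\pi(w_j)=0$; as the $w_j$ generate $I$ and $\pi$ is $\QG$-linear, $\pi$ annihilates all of $I$. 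For arbitrary $x\in G$ we have $xK-K\in I$, whence $xH-H=\pi(xK-K)=0$, i.e.\ $x\in H$. Therefore $G=H$ is compactly generated, with generating pair $(K,(T\cup T^{-1})\setminus K)$.

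The main obstacle is conceptual rather than computational: one must first recognise that the role of the group algebra is played by $\Q[G/K]$ and recast $\FP_1$ as finite generation of its augmentation ideal, precisely because the naive object $\QG$ is not a discrete module. Once this reduction is secured, the telescoping identity and the quotient-map argument are the exact analogues of the discrete proof. Alternatively, the reverse implication could be obtained more abstractly from Proposition~\ref{prop:fp}, by writing $G$ as the directed union of its compactly generated open subgroups and testing the $\FP_1$ condition against the associated direct system; but the direct argument above is self-contained and yields an explicit generating pair.
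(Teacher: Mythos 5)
Your proof is correct. The paper does not actually prove this proposition --- it only cites \cite[Prop.\ 5.3]{it:ratdiscoh} --- and your argument (recasting type $\FP_1$ as finite generation of the augmentation ideal $I=\ker(\Q[G/K]\to\Q)$, telescoping along a word in $S\cup K$ for one direction, and collapsing via $\Q[G/K]\to\Q[G/H]$ for the other) is exactly the standard adaptation of the discrete-group argument that the cited source follows; the only cosmetic slip is that a general element of $\langle S\cup K\rangle$ is a word with letters from $S$ and $K$ interleaved rather than of the normal form $s_{i_1}\cdots s_{i_r}k$, but your telescoping identity already handles this since every factor lying in $K$ contributes a zero term.
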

By combining the latter finiteness conditions, one defines a t.d.l.c. group $G$ to be {\it of type $\FP$},
if  $G$ is of type $\FP_\infty$ with $\ccd_\Q(G)=d<\infty$. In other words,
the trivial left $\QG$-module $\Q$ has a projective resolution which is finitely generated and concentrated in degrees $0$ to $d$. 

\medskip 

Since the group algebra $\QG$ is not a discrete $\QG$-module unless the group $G$ itself is discrete, in \cite{it:ratdiscoh} a possible substitute has been introduced and studied. Namely,  the rational discrete standard bimodule $\biB(G)$ (cf. \eqref{eq:biB2}). The following are in analogy with the discrete case.
\begin{fact}[\protect{\cite[Prop. 4.3]{it:ratdiscoh}}]\label{fact:bg}
Let $G$ be a t.d.l.c.\! group. 
One has
$$\Hom_G(\Q,\biB(G))\simeq
\begin{cases}
\Q&\ \text{if $G$ is compact,}\\
0&\ \text{if $G$ is not compact.}
\end{cases}$$
\end{fact}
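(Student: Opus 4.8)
The plan is to reduce the computation to the $G$-invariants of the building blocks $\Q[G/\caO]$ and then push the answer through the direct limit defining $\biB(G)$ in \eqref{eq:biB2}. First I would record the standard identification $\Hom_G(\Q,M)\cong M^G$ for every $M\in\ob(\QGdis)$: a $\QG$-homomorphism $\Q\to M$ is determined by the image of $1$, and since $\Q$ is the trivial module this image must be a $G$-fixed vector, while conversely every fixed vector defines such a map. Next I want to interchange $\Hom_G(\Q,\argu)$ with the colimit. The trivial module $\Q$ is finitely generated, being a quotient of $\Q[G/K]$ via augmentation for any compact open subgroup $K$, hence of type $\FP_0$; so Proposition~\ref{prop:fp} with $n=0$ yields a monomorphism $\varinjlim_{\caO}\Hom_G(\Q,\Q[G/\caO])\to\Hom_G(\Q,\biB(G))$, and since the structure maps $\eta_{U,V}$ are injective, Remark~\ref{rem:fp} promotes this to an isomorphism. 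It therefore suffices to compute $\varinjlim_{\caO}(\Q[G/\caO])^G$.

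Then I would compute each factor $(\Q[G/\caO])^G$ directly. Because $G$ acts transitively on $G/\caO$, any $G$-fixed element has constant coefficients on the cosets; but an element of $\Q[G/\caO]$ is finitely supported, so a nonzero constant coefficient function can exist only when $G/\caO$ is finite. As $\caO$ is compact open, $G/\caO$ is finite precisely when $G$ is compact. Consequently $(\Q[G/\caO])^G=0$ for every $\caO$ when $G$ is non-compact, whence $\varinjlim_{\caO}(\Q[G/\caO])^G=0$ and the second case of the statement follows at once.

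Finally I would treat the compact case, where the delicate point is to verify that the colimit of the invariant lines does not collapse. Here $(\Q[G/\caO])^G=\Q\cdot N_{\caO}$, spanned by the norm element $N_{\caO}=\sum_{g\caO\in G/\caO}g\caO$. A short computation, using that $\{grV\}$ runs once over $G/V$ as $g\caO$ runs over $G/U$ and $r$ over coset representatives of $U/V$, gives $\eta_{U,V}(N_U)=\tfrac{1}{|U:V|}N_V$; thus on invariants every structure map is multiplication by a nonzero rational, i.e.\ an isomorphism $\Q\to\Q$, and the colimit of this system is $\Q$, giving the first case. I expect the main obstacle to be exactly the interchange of $\Hom_G(\Q,\argu)$ with the direct limit: for infinite $G$ invariants need not commute with filtered colimits, which is what forces the use of the finiteness of $\Q$ through Proposition~\ref{prop:fp}; the norm-element calculation in the compact case is the secondary point requiring care, to ensure the limit is genuinely $\Q$ rather than degenerate.
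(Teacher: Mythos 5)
The paper states this fact only as a citation to \cite[Prop.~4.3]{it:ratdiscoh} and gives no proof of its own, so there is nothing internal to compare against; your argument is correct and is the natural one. The three steps all hold up: the passage from $\Hom_G(\Q,\biB(G))$ to $\varinjlim_{\caO}(\Q[G/\caO])^G$ is justified exactly as you say by Proposition~\ref{prop:fp} for $n=0$ together with Remark~\ref{rem:fp} (the $\eta_{U,V}$ being injective), the invariants of an infinite transitive permutation module vanish by the finite-support argument, and the norm-element computation $\eta_{U,V}(N_U)=|U:V|^{-1}N_V$ correctly shows the system of invariant lines in the compact case is a directed system of isomorphisms with colimit $\Q$.
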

\begin{prop}[\protect{\cite[Prop. 4.7]{it:ratdiscoh}}]
\label{prop:FPbg}
Let $G$ be a t.d.l.c. group of type $\FP$. Then
\begin{equation}
\label{eq:FP-1}
\ccd_\Q(G)=\max\{\,k\geq 0\mid \dH^k(G,\biB(G))\not=0\,\}.
\end{equation}
\end{prop}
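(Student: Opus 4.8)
The plan is to prove the two inequalities separately, the real content lying in the lower bound $\ccd_\Q(G) \le \max\{\,k : \dH^k(G,\biB(G)) \neq 0\,\}$. Write $d = \ccd_\Q(G)$, which is finite since $G$ is of type $\FP$. The upper bound is immediate: a projective resolution of $\Q$ of length $d$ forces $\dExt_G^k(\Q,\argu) = 0$ for all $k > d$, so $\dH^k(G,\biB(G)) = 0$ for $k > d$ and the right-hand maximum is at most $d$. For the lower bound I must exhibit a single nonzero group, namely $\dH^d(G,\biB(G)) \neq 0$. Before starting I record two structural facts about the functor $T := \dH^d(G,\argu) = \dExt_G^d(\Q,\argu)$: since $\dH^{d+1}(G,\argu) = 0$, the long exact cohomology sequence shows that $T$ is right exact; and since $G$ is of type $\FP$, hence of type $\FP_\infty$, Corollary~\ref{cor:fpinf} shows that $T$ commutes with direct limits in $\QGdis$, in particular with arbitrary direct sums.

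First I would reduce the problem to a single permutation module. Because $d$ is the projective dimension of $\Q$ in $\QGdis$, the standard characterization of projective dimension (via dimension shifting, using that $\QGdis$ has enough projectives by Proposition~\ref{prop:permod}; otherwise a suitable syzygy of $\Q$ would be projective and $\Q$ would admit a resolution of length $d-1$) produces a module $M_0$ with $\dH^d(G,M_0) \neq 0$. Now every discrete module is a quotient of a direct sum of permutation modules $\Q[G/\caO]$ with $\caO \in \CO(G)$: each $m$ has open, hence compact-open-containing, stabilizer $\caO_m$, and the maps $\Q[G/\caO_m] \to M_0,\ g\caO_m \mapsto gm$, assemble to an epimorphism $\bigoplus_i \Q[G/\caO_i] \twoheadrightarrow M_0$. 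Applying $T$, which is right exact and commutes with direct sums, yields a surjection $\bigoplus_i \dH^d(G,\Q[G/\caO_i]) \twoheadrightarrow \dH^d(G,M_0) \neq 0$. Hence $\dH^d(G,\Q[G/K]) \neq 0$ for at least one compact open subgroup $K$.

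Finally I would transport this non-vanishing into $\biB(G)$. Since $\biB(G) = \varinjlim_{\caO \in \CO(G)} \Q[G/\caO]$, Corollary~\ref{cor:fpinf} gives $\dH^d(G,\biB(G)) \cong \varinjlim_\caO \dH^d(G,\Q[G/\caO])$. The crucial observation, and the step I expect to carry the real weight, is that each structural map $\eta_{U,V} \colon \Q[G/U] \to \Q[G/V]$ (for $V \subseteq U$) is split injective in $\QGdis$: the projection $\pi_{V,U} \colon \Q[G/V] \to \Q[G/U],\ gV \mapsto gU$, is $\QG$-linear and satisfies $\pi_{V,U} \circ \eta_{U,V} = \iid$, because each coset representative $r \in U$ yields $grU = gU$ and the factor $1/|U:V|$ cancels the sum. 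Functoriality then makes every transition map $\dH^d(G,\eta_{U,V})$ split injective, so the directed system $\{\dH^d(G,\Q[G/\caO])\}$ has injective transition maps, and each term injects into the colimit. In particular $0 \neq \dH^d(G,\Q[G/K]) \hookrightarrow \dH^d(G,\biB(G))$, completing the lower bound. The only delicate point is that, a priori, the non-vanishing found at the single module $\Q[G/K]$ might be annihilated in the passage to the limit; the split-injectivity of the $\eta_{U,V}$, read off directly from their explicit averaging formula, is exactly what prevents this collapse.
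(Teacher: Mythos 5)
Your argument is correct, and it follows essentially the same route as the source the paper cites for this statement (the paper itself defers the proof to \cite[Prop.~4.7]{it:ratdiscoh}): reduce the non-vanishing of $\dH^d(G,\argu)$ in top degree $d=\ccd_\Q(G)$ to a single permutation module $\Q[G/K]$ via right-exactness and commutation with direct limits, and then pass to $\biB(G)$ using the split injectivity of the maps $\eta_{U,V}$ --- exactly the injectivity of $\dH^\bullet(\eta_{U,V})$ that the paper invokes again in the proof of $(b)\Rightarrow(c)$. No gaps; the explicit splitting $\pi_{V,U}\circ\eta_{U,V}=\iid$ is the right observation and is what guarantees each term survives into the colimit.
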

\subsection{Derivations}\label{ss:der} Let $\Der(G,M)$ denote the group of all (algebraic) derivations $d$ from a group $G$ to a left $G$-module $M$, i.e., $d$ is a mapping of sets $d\colon G\to M$ satisfying $d(gh)=gd(h)+d(g)$ for all $g,h\in G$. 

For a t.d.l.c. group $G$ and a discrete $\QG$-module $M$, we define
\begin{eqnarray}\label{eq:der}
\Der_K(G,M)&=&\set{d\in \Der(G,M)\mid d(k)=0,\ \forall k\in K},\\
\PDer_K(G,M)&=&\set{d\in \Der_K(G,M)\mid \exists m\in M^K\ s.t.\ d(g)=gm-m\ \forall g\in G},\nonumber
\end{eqnarray}
where $K$ is a compact open subgroup of $G$. Clearly every element $d$ of $\Der_K(G,M)$ is a continuous map, where $M$ carries the discrete topology.

By analogy to the discrete case, one may prove the following result and we include the standard proof for reader's convenience.
\begin{prop}\label{prop:1cohomgr} 
For a compact open subgroup $K$ of a t.d.l.c. group $G$ there is a natural isomorphism 
$$\dH^1(G,M)\cong \Der_K(G,M)/\PDer_K(G,M),$$
where $M\in ob(\QGdis)$.
\end{prop}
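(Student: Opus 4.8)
The plan is to compute $\dH^1(G,M)=\dExt^1_G(\Q,M)$ from a one-step projective presentation of the trivial module built out of the permutation module $\Q[G/K]$. Write $\eps\colon\Q[G/K]\to\Q$ for the augmentation $gK\mapsto 1$ and set $I_K=\ker(\eps)$. Since $K$ is open, $\Q[G/K]$ is a discrete permutation module, hence an object of $\QGdis$; as $I_K$ and $\Q$ are respectively a submodule and a quotient of it, the sequence
$$0\to I_K\to\Q[G/K]\xrightarrow{\ \eps\ }\Q\to0$$
lives in $\QGdis$. By Proposition~\ref{prop:permod} the module $\Q[G/K]$ is projective, so $\dExt^1_G(\Q[G/K],M)=0$, and applying $\Hom_G(\argu,M)$ collapses the long exact sequence to a natural isomorphism
$$\dH^1(G,M)\cong\coker\big(\Hom_G(\Q[G/K],M)\xrightarrow{\ \rho\ }\Hom_G(I_K,M)\big),$$
where $\rho$ is restriction along $I_K\hookrightarrow\Q[G/K]$. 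It then remains to identify both $\Hom$-groups, and the map $\rho$, with derivations.

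For the target of $\rho$, I would exhibit $\phi\mapsto d_\phi$, with $d_\phi(g)=\phi(gK-K)$, as an isomorphism $\Hom_G(I_K,M)\cong\Der_K(G,M)$. The derivation identity follows by writing $ghK-K=g(hK-K)+(gK-K)$ and using $G$-equivariance of $\phi$, while $d_\phi(k)=\phi(kK-K)=\phi(0)=0$ for $k\in K$ gives $d_\phi\in\Der_K(G,M)$. For the inverse, note that $\{\,gK-K\mid gK\neq K\,\}$ is a $\Q$-basis of $I_K$, so any $d\in\Der_K(G,M)$ determines a $\Q$-linear $\phi$ by $\phi(gK-K)=d(g)$; well-definedness on cosets uses $d(gk)=g\,d(k)+d(g)=d(g)$ for $k\in K$, and $G$-equivariance of $\phi$ is once more the derivation identity. (The continuity observed in the text is automatic, since $d_\phi$ is constant on left $K$-cosets.)

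For the source of $\rho$, a $G$-map $\psi\colon\Q[G/K]\to M$ is determined by $m=\psi(K)$, which must be $K$-fixed, so $\Hom_G(\Q[G/K],M)\cong M^K$; passing $\rho(\psi)$ through the identification above yields the derivation $g\mapsto\psi(gK-K)=gm-m$. Hence $\rho$ has image exactly $\{\,g\mapsto gm-m\mid m\in M^K\,\}=\PDer_K(G,M)$, and the cokernel is $\Der_K(G,M)/\PDer_K(G,M)$, as claimed; naturality in $M$ holds because each identification is visibly functorial. This proof is essentially routine, so the hard part is only the bookkeeping: one must check that $\phi\mapsto d_\phi$ is well defined on the coset basis of $I_K$ and, in the final step, that the image of $\rho$ is precisely the principal derivations \emph{with $K$-fixed potential} rather than a larger class. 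Both points are the natural place for an error to hide, and both reduce to the elementary identities recorded above.
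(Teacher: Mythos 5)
Your argument is correct and is essentially the paper's own proof: both use the augmentation sequence $0\to I_K\to\Q[G/K]\to\Q\to 0$, the projectivity of $\Q[G/K]$, the identification $\Hom_G(I_K,M)\cong\Der_K(G,M)$ via $\phi\mapsto\phi(gK-K)$, and $\Hom_G(\Q[G/K],M)\cong M^K$ mapping onto the principal derivations. Phrasing the conclusion as a cokernel rather than writing out the four-term exact sequence is only a cosmetic difference.
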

\begin{proof} Let 
\begin{equation}\label{eq:aug ses}
\xymatrix{0\ar[r] &N\ar[r]&\Q[G/K]\ar[r]^{\varepsilon}&\Q\ar[r]&0}
\end{equation}
be the short exact sequence in $\QGdis$ provided by the augmentation map $\varepsilon$. Thus the set $\{gK-K\mid g\in G\setminus K\}$ is a generating set of $N$ as $\Q$-vector space. Firstly, notice that $$\Hom_G(N,M)\cong\Der_K(G,M),$$for every $M\in ob(\QGdis)$. Indeed for every $\QG$-map $\varphi\colon N\to M$ let 
\begin{equation}\label{eq:formula}
D_{\varphi}:G\rightarrow M,\quad D_{\varphi}(g)=\varphi(gK- K)\quad \forall g\in G.
\end{equation}
Clearly, $D_{\varphi}\in\Der_{ K}(G,M)$. Thus the formula \eqref{eq:formula} defines a natural homomorphism from  $\Hom_G(N,M)$ to $\Der_{ K}(N, M)$. This homomorphism admits the  inverse  $D\mapsto \varphi_D$ given by $\varphi_D(gK-K)=D(g)$, which is well-defined since $D\in\Der_K(G,M)$ is constant on the cosets of $K$ in $G$.

By applying the long exact cohomology functor to \eqref{eq:aug ses} with coefficients in $M$, one has
\begin{equation}\label{eq:aug les}
\xymatrix{0\ar[r]&M^G\ar[r]&M^K\ar[r]&\Der_K(G,M)\ar[r]&\dH^1(G,M)\ar[r]&0},
\end{equation}
since $\Q[G/K]$ is projective in $\QGdis$
 and  $\Hom_G(\Q[G/K],M)\cong\Hom_K(\Q,M)$ (cf. Proposition~\ref{prop:permod} and \cite[\S 2.9]{it:ratdiscoh}). Finally, as $\PDer_K(G,M)\cong M^K/M^G$ by definition, \eqref{eq:aug les} yields the claim.
\end{proof}
\begin{cor}\label{cor:contcoho} For a t.d.l.c.\! group $G$ and  $M\in ob(\QGdis)$, let $\Der_{top}(G,M)$ be the group of all continuous derivations from $G$ to $M$ and $\PDer_{top}(G,M)$ the subgroup of the principal one. Thus
$$\dH^1(G,M)\cong \Der_{top}(G,M)/\PDer_{top}(G,M),$$
naturally.
\end{cor}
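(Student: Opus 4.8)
The plan is to deduce the corollary from Proposition~\ref{prop:1cohomgr} by fixing a single compact open subgroup and comparing the two descriptions of $\dH^1$. Since $G$ is t.d.l.c., van Dantzig's theorem provides a compact open subgroup $K$, and I would work with this $K$ throughout. The first step is the observation that a derivation $d\colon G\to M$ is continuous (for the discrete topology on $M$) precisely when it vanishes on some compact open subgroup: as $d(1)=0$, continuity forces $d^{-1}(0)$ to be an open neighbourhood of the identity, which contains a compact open subgroup by van Dantzig, while the converse is already recorded in the text before Proposition~\ref{prop:1cohomgr}. The same reasoning applied to a principal derivation $g\mapsto gm-m$ shows it is continuous exactly when $m\in M^L$ for some compact open $L$; since $M\in\mathrm{ob}(\QGdis)$ has open stabilizers this is automatic. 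Consequently $\Der_{top}(G,M)=\bigcup_K\Der_K(G,M)$ and $\PDer_{top}(G,M)=\bigcup_K\PDer_K(G,M)$, directed unions over the compact open subgroups of $G$, so the inclusions $\Der_K(G,M)\subseteq\Der_{top}(G,M)$ and $\PDer_K(G,M)\subseteq\PDer_{top}(G,M)$ induce a well-defined map on quotients. I would then show this map is an isomorphism.

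Injectivity is immediate: if $d\in\Der_K(G,M)$ becomes principal in $\Der_{top}(G,M)$, say $d(g)=gm-m$, then $d(k)=0$ for all $k\in K$ forces $m\in M^K$, so that $d\in\PDer_K(G,M)$ already and its class in $\Der_K(G,M)/\PDer_K(G,M)$ vanishes.

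For surjectivity I would exploit the cohomological triviality of compact groups. Given a continuous derivation $d\colon G\to M$, its restriction $d|_K$ is a continuous derivation of $K$ into the discrete $\Q[K]$-module $M$; since $K$ is compact we have $\ccd_\Q(K)=0$ by Proposition~\ref{prop:cd}(a), whence $\dH^1(K,M)=0$, and Proposition~\ref{prop:1cohomgr} applied to $K$ shows $d|_K$ is principal, i.e.\ $d(k)=km-m$ for all $k\in K$ and some $m\in M$. Setting $d'(g)=d(g)-(gm-m)$ then gives a derivation of $G$ vanishing on $K$, hence $d'\in\Der_K(G,M)$, while $d-d'$ is the continuous principal derivation $g\mapsto gm-m$, an element of $\PDer_{top}(G,M)$; thus $d$ and $d'$ define the same class, exhibiting surjectivity. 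Combining the resulting isomorphism $\Der_K(G,M)/\PDer_K(G,M)\cong\Der_{top}(G,M)/\PDer_{top}(G,M)$ with Proposition~\ref{prop:1cohomgr} yields the claim, and naturality in $M$ is inherited from that of Proposition~\ref{prop:1cohomgr} together with the evident functoriality of the inclusions of derivation groups. The only genuinely delicate point is reconciling the \emph{floating} compact open subgroup implicit in $\Der_{top}$ with the \emph{fixed} $K$ of Proposition~\ref{prop:1cohomgr}; this is exactly what the vanishing $\dH^1(K,M)=0$ resolves in the surjectivity step, so that is where I expect the real content to lie.
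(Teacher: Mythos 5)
Your proof is correct, and its overall shape matches the paper's: both reduce to Proposition~\ref{prop:1cohomgr} by showing that the inclusion $\Der_K(G,M)\subseteq\Der_{top}(G,M)$ induces an isomorphism on the quotients, the whole content being that every continuous derivation is cohomologous, modulo a continuous principal derivation, to one vanishing on the fixed $K$. Where you differ is in how the element $m$ with $d(k)=km-m$ for $k\in K$ is produced. The paper constructs it by hand: it passes to the affine action $\rho(g,m)=gm+d(g)$, observes that a $K$-orbit is finite by continuity, and takes its average (legitimate over $\Q$) to obtain an affine $K$-fixed point; subtracting the associated principal derivation then kills $d$ on $K$. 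You instead restrict $d$ to $K$ and invoke $\ccd_\Q(K)=0$ (Proposition~\ref{prop:cd}(a)) together with Proposition~\ref{prop:1cohomgr} applied to the compact group $K$ itself, concluding $\dH^1(K,M)=0$ and hence that $d|_K$ is principal. The two routes are morally the same---the averaging argument is precisely the standard proof that first cohomology of a compact group with rational discrete coefficients vanishes---but yours leans on previously established machinery where the paper's is self-contained at this point. Your write-up also makes explicit the identifications $\Der_{top}(G,M)=\bigcup_L\Der_L(G,M)$ and $\PDer_{top}(G,M)=\bigcup_L\PDer_L(G,M)$ and the injectivity of the induced map on quotients, all of which the paper leaves implicit; these steps are carried out correctly.
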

\begin{proof}
Let $d$ be a continuous derivation from $G$ to $M$. Then $\rho:G\times M\to M$ given by $\rho(g,m)=gm+d(g)$ defines a continuous affine transformation of $M$. For every compact open subgroup $K$ of $G$, the $K$-orbit is finite, by continuity. So the average of this orbit is a $K$-fixed point, say $x$. Let $d'\in \PDer_K(G,M)$ be the principal derivation associated to $x$. Since $d-d'\in\Der_K(G,M)$, every continuous 1-cocycle is cohomologous to a 1-cocycle vanishing on $K$. 
\end{proof}

\begin{rem}\label{rem:continuous}
One can chose to develop a cohomology theory for a t.d.l.c. group $G$ directly via cochain complexes. For $M\in ob(\QGdis)$ let $C^n(G,M)$ be the set of all continuous functions from $G^n$ to $M$, where $M$ carries the discrete topology. By equipping this with the usual coboundary operators, one has a cochain complex whose cohomology can be defined to be the {\it continuous cohomology } of $G$, e.g. \cite{hochmost,moore}. By Corollary~\ref{cor:contcoho}, the rational discrete cohomology of $G$ turns out to be equivalent to the continuous one  in degree 0 and 1, but at this stage we do not know if this is true for $n\geq2$.
\end{rem}
\begin{rem}\label{rem:bredon} Let $\ca{C}$ be the family of all compact open subgroups of a t.d.l.c. group $G$. By van Dantzig's Theorem, $\ca{C}$ is non-empty. Furthermore $\ca{C}$ is closed under conjugation and taking finite intersections. Let $\ca{O}_{\ca{C}}(G)$ be the orbit category of $G$ w.r.t. $\ca{C}$. Namely, the objects are the $G$-sets $G/K$, for $K\in\ca{C}$, and the morphisms are the $G$-maps between them. Thus one may define the category of Bredon modules over $\ca{O}_{\ca{C}}(G)$ as usual. The Bredon cohomology of $G$ is not equivalent to the rational discrete cohomology of $G$. Indeed a necessary condition for a t.d.l.c. group $G$ to be of type $\FP_0$ in the Bredon cohomology is the following: there are finitely many compact open subgroups $K_1,\ldots,K_n$ of $G$ such that any compact open subgroup of $G$ is subconjugated to one of the $K_i$s (cf. \cite[Lemma 2.3]{nun}). On the other hand, being of type $\FP_0$ for a t.d.l.c. group in the rational discrete cohomology is an empty condition.
\end{rem}

\begin{rem}
We are aware of a possible connection between rational discrete cohomology and the cohomology of the Hecke algebra (cf. \cite[\S 2]{hecke}) but it will be not discussed in this paper.
\end{rem}

\subsection{The almost invariant functions}\label{ss:aif h1}  In order to connect the rational discrete cohomology of $G$ to the number of rough ends as clearly as possible,  we provide another representation of $\dH^1(G,M)$  whenever $M$ is a transitive discrete permutation module. 

Let $G$ be a compactly generated t.d.l.c. group and let $(K,S)$ be a generating pair of $G$. Clearly, the set $\Hom_\Q(\Q[G/K],\Q)$ of all functions from $G/K$ to $\Q$ is a $G$-set with action given by
\begin{equation}
(g\cdot\alpha)(x)=\alpha(g^{-1}x)\ \ \forall\alpha\in\Hom_\Q(\Q[G/K],\Q),\ \forall g\in G,\ \forall x\in G/K.
\end{equation}
Following \cite{dun:acc}, we say that two maps $\alpha,\beta\in \Hom_\Q(\Q[G/K],\Q)$ are  \emph{almost equal}, and denote this by $\alpha=_a\beta$, if $\alpha(x)=\beta(x)$ for all but finitely many elements $x\in G/K$. 
\begin{ex}\label{ex:quo_ainv}
Every element $m\in\Q[G/K]$ can be expressed as formal sum $$m=\sum_{x\in G/K} q_x x$$ with $q_x\in\Q$ being 0 for almost all $x\in G/K$. Then $m$ can be identified with the projection $p_m:G/K\rightarrow\Q$ given by $p_m(x)=q_x$, showing that $p_m=_a 0$. Thus $\Q[G/K]$ is the set of all \emph{almost zero functions} in $\Hom_\Q(\Q[G/K],\Q)$.
\end{ex}
An element $\alpha\in \Hom_\Q(\Q[G/K],\Q)$ is called an \emph{almost $(G,K)$-invariant function} if $g\cdot\alpha=_a \alpha$ for all $g\in G$ and $k\cdot\alpha=\alpha$ for all $k\in K$.
Denote by $\ca{A}Inv_K(G,\Q)$ the space of all almost $(G,K)$-invariant functions. 
\begin{prop}\label{prop:almcoh} 
For every compact open subgroup $K$ of a t.d.l.c. group $G$ one has
$$\dH^1(G,\Q[G/K])\cong \frac{\ca{A}Inv_K(G,\Q)}{C(G/K)+\Q[G/K]^K},$$
where $$C(G/K)=\{\alpha\in \Hom(\Q[G/K],\Q)|\alpha \mbox{\ constant}\},$$ and $\Q[G/K]^K$ denotes the largest $K$-invariant submodule of $\Q[G/K]$.
\end{prop}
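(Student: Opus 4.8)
The plan is to deduce this from Proposition~\ref{prop:1cohomgr} applied with $M=\Q[G/K]$, which gives $\dH^1(G,\Q[G/K])\cong\Der_K(G,\Q[G/K])/\PDer_K(G,\Q[G/K])$. So it suffices to produce a natural isomorphism
$$\Der_K(G,\Q[G/K])\cong\ca{A}Inv_K(G,\Q)/C(G/K)$$
that carries $\PDer_K(G,\Q[G/K])$ onto the image of $\Q[G/K]^K$; the nested quotients then assemble into the claimed formula. Throughout I identify $\Q[G/K]$ with the almost zero functions in $\Hom_\Q(\Q[G/K],\Q)$ as in Example~\ref{ex:quo_ainv}, and I use the action convention $(g\cdot f)(x)=f(g^{-1}x)$, equivalently $(g^{-1}\cdot f)(x)=f(gx)$.

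First I would introduce the comparison map $\Phi\colon\ca{A}Inv_K(G,\Q)\to\Der_K(G,\Q[G/K])$ by $\Phi(\alpha)(g)=g\cdot\alpha-\alpha$. Since $\alpha$ is almost $(G,K)$-invariant, $g\cdot\alpha-\alpha$ is an almost zero function and hence lies in $\Q[G/K]$; the cocycle identity $\Phi(\alpha)(gh)=g\cdot\Phi(\alpha)(h)+\Phi(\alpha)(g)$ is immediate, and $\Phi(\alpha)(k)=k\cdot\alpha-\alpha=0$ for $k\in K$ by $K$-invariance, so indeed $\Phi(\alpha)\in\Der_K(G,\Q[G/K])$.

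The crux is surjectivity, which I would establish by explicitly integrating a derivation to a potential. Given $d\in\Der_K(G,\Q[G/K])$, view each $d(g)$ as a function $G/K\to\Q$ and set $\alpha(gK)=-d(g)(gK)$. This is well-defined on cosets because $d(gk)=g\cdot d(k)+d(g)=d(g)$ for $k\in K$, so $d(gk)(gkK)=d(g)(gK)$. Expanding $d(g^{-1}h)=g^{-1}\cdot\bigl(d(h)-d(g)\bigr)$ via the cocycle rule and then using $(g^{-1}\cdot f)(x)=f(gx)$, a direct calculation gives $(g\cdot\alpha-\alpha)(hK)=d(g)(hK)$ for all $g,h$, i.e. $\Phi(\alpha)=d$. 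Moreover $g\cdot\alpha-\alpha=d(g)\in\Q[G/K]$ and $k\cdot\alpha-\alpha=d(k)=0$, so $\alpha\in\ca{A}Inv_K(G,\Q)$. This verification, together with the well-definedness check, is the main obstacle; the remaining steps are formal.

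Finally I would identify the kernel and the principal part. The kernel of $\Phi$ consists of the genuinely $G$-invariant functions, and since $G$ acts transitively on $G/K$ these are exactly the constants, so $\ker\Phi=C(G/K)$ and $\Der_K(G,\Q[G/K])\cong\ca{A}Inv_K(G,\Q)/C(G/K)$. Under this isomorphism a principal derivation $d(g)=g\cdot m-m$ with $m\in\Q[G/K]^K$ is precisely $\Phi(m)$, where $m$ qualifies as an element of $\ca{A}Inv_K(G,\Q)$ because it is almost zero and $K$-fixed; hence $\PDer_K(G,\Q[G/K])$ corresponds to the image of $\Q[G/K]^K$. Passing to the quotient yields
$$\dH^1(G,\Q[G/K])\cong\frac{\ca{A}Inv_K(G,\Q)/C(G/K)}{\bigl(\Q[G/K]^K+C(G/K)\bigr)/C(G/K)}\cong\frac{\ca{A}Inv_K(G,\Q)}{C(G/K)+\Q[G/K]^K},$$
as claimed.
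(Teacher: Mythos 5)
Your proof is correct and follows essentially the same route as the paper: both rest on the short exact sequence $0\to C(G/K)\to \ca{A}Inv_K(G,\Q)\xrightarrow{\ \Phi\ }\Der_K(G,\Q[G/K])\to 0$ with $\Phi(\alpha)(g)=g\cdot\alpha-\alpha$, combined with Proposition~\ref{prop:1cohomgr} and the identification of $\PDer_K(G,\Q[G/K])$ with the image of $\Q[G/K]^K$. The only difference is that the paper delegates the surjectivity of $\Phi$ to the second part of the proof of Lemma~1.1 in Bamford--Dunwoody, whereas you verify it directly via the explicit potential $\alpha(gK)=-d(g)(gK)$; your computation checks out.
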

\begin{proof}
The second part of the proof of Lemma 1.1 in \cite{dunbam} can be easily adapted to our context. Thus for every compact open subgroup $K$ of $G$ there exists the following short exact sequence 
\begin{equation}
0\longrightarrow C(G/K)\longrightarrow \ca{A}Inv_K(G,\Q)\stackrel{\partial}{\longrightarrow} \Der_K(G,\Q[G/K])\longrightarrow 0,
\end{equation}
where for each $\alpha$ the map $\partial\alpha\colon G\rightarrow \Q[G/K]$ is given by $\partial\alpha(g)=g\cdot\alpha-\alpha$.

As $\PDer_K(G,\Q[G/K])\cong\Q[G/K]^K$, applying Proposition~\ref{prop:1cohomgr} concludes the proof.\end{proof}

\section{The decomposition theorem}

The aim of this section is to prove Theorem~A*.  Clearly, the proof of Theorem~A* can be shortened considering that the equivalence between a) and b) is well-known, but here we prove the result via the chain of implications $(a)\Rightarrow (b)\Rightarrow (c)\Rightarrow (a)$.

 Recall that a t.d.l.c. group $G$ acts {\it discretely} on a graph if the stabilizers are open subgroups of $G$.
\begin{prop}\label{prop:compgen}
Let $G$ be a compactly generated t.d.l.c. group. Suppose that $G$ acts discretely on a tree $\ca{T}$ such that
\begin{itemize}
\item[(i)] the group $G$ is acting without edge inversions;
\item[(ii)] the quotient graph  $G\backslash\ca{T}$ is finite;
\item[(iii)] the edge stabilizers $G_\eue$ are compact open subgroups of $G$.
\end{itemize}
Then the vertex stabilizers $G_v$ are compactly generated.
\end{prop}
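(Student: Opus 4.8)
The plan is to use Bass--Serre theory to turn the tree action into an algebraic decomposition and then run a normal-form argument. Since $G$ acts discretely, without inversions, and with finite quotient $Y=G\backslash\ca{T}$, the structure theorem (cf.\ \cite{ser:trees}) realises $G$ as the fundamental group of the finite graph of groups $(\ca{G},Y)$ whose vertex groups are the stabilizers $G_{v_1},\dots,G_{v_m}$ of a lift of the vertices of $Y$ and whose edge groups are the compact open stabilizers $G_{\eue}$. Lifting a spanning tree $Y_0\subseteq Y$ to a subtree of $\ca{T}$, the theorem moreover exhibits a finite set of stable letters $t_1,\dots,t_k\in G$ such that $G=\langle G_{v_1},\dots,G_{v_m},t_1,\dots,t_k\rangle$. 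As every vertex stabilizer of $\ca{T}$ is $G$-conjugate to one of the $G_{v_i}$, it is enough to prove that each $G_{v_i}$ is compactly generated.

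Next I would approximate the vertex groups by compactly generated subgroups. Fix a compact open subgroup $\ca{K}$ and a finite set $S$ with $G=\langle \ca{K},S\rangle$. The decisive t.d.l.c.\ input is that $\ca{K}$ fixes a point of $\ca{T}$: since the action is discrete every point stabilizer is open, so each orbit $\ca{K}/(\ca{K}\cap G_x)$ is finite, whence $\ca{K}$ has bounded orbits and fixes the circumcenter of their convex hull; thus $\ca{K}\le G_w$ for some vertex $w$, which we may take to be one of the $v_i$. Expressing each element of the finite set $S$ as a word in the vertex groups and the stable letters produces, for every $i$, only finitely many syllables $F_i\subset G_{v_i}$. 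Let $H_i\le G_{v_i}$ be generated by the finitely many edge stabilizers incident to $v_i$, by $F_i$, and by $\ca{K}$ when $v_i=w$. Each $H_i$ is generated by finitely many compact open subgroups together with a finite set, hence is compactly generated, and $H_i$ contains every edge group attached to $v_i$.

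Finally I would upgrade ``the $H_i$ generate'' to ``$H_i=G_{v_i}$''. Form the graph of groups $(\ca{G}',Y)$ on the same underlying graph with vertex groups $H_i$ and the unchanged edge groups $G_{\eue}$; the inclusions $H_i\hookrightarrow G_{v_i}$ induce a morphism $\pi_1(\ca{G}',Y)\to\pi_1(\ca{G},Y)=G$ which is injective because it carries reduced words to reduced words. By construction its image contains all the chosen syllables, all stable letters and $\ca{K}$, hence contains $\ca{K}\cup S$ and is therefore all of $G$; so the morphism is an isomorphism. Now take $g\in G_{v_i}$: viewed in $\pi_1(\ca{G}',Y)=G$ it has a reduced normal form whose image in $\pi_1(\ca{G},Y)$ is again reduced and equals $g$, which lies in $G_{v_i}$ and so has syllable length one; uniqueness of reduced length then forces the $(\ca{G}')$-normal form of $g$ to be a single syllable lying in $H_i$, giving $g\in H_i$ and hence $G_{v_i}=H_i$. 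I expect the crux to be precisely this last comparison of normal forms across the two graph-of-groups structures, together with the verification that the compact open generating subgroup can be absorbed into a single vertex group via its fixed point on $\ca{T}$; the remaining bookkeeping (conjugacy of arbitrary stabilizers to the $G_{v_i}$, and compact generation of the $H_i$) is routine.
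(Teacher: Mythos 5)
Your argument is correct, but it is genuinely different from the one in the paper. The paper's proof is homological: it invokes the characterization of compact generation as the finiteness condition $\FP_1$ (Proposition~\ref{prop:fp1}), reads off from the tree the short exact sequence $0\to\coprod_{\eue}\idn_{G_\eue}^G(\Q)\to\coprod_{v}\idn_{G_v}^G(\Q)\to\Q\to0$ in $\QGdis$, notes that the edge term is finitely generated projective, applies the horseshoe lemma to conclude that the middle term (hence each summand $\idn_{G_v}^G(\Q)$) is of type $\FP_1$, and then descends to $\Q$ being of type $\FP_1$ over $\Q[G_v]$ using exactness of induction and preservation of projectives. You instead adapt the classical combinatorial Bass--Serre argument (the one showing that vertex groups of a finite graph-of-groups decomposition of a finitely generated group with finitely generated edge groups are finitely generated): approximate the vertex groups by compactly generated open subgroups $H_i$ containing the edge groups and the syllables of a compact generating set, and force $H_i=G_{v_i}$ by comparing reduced words for the two graph-of-groups structures. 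The one genuinely topological input you need --- that the compact open subgroup $\ca{K}$ can be absorbed into a single vertex group because it has finite orbits and hence fixes a vertex (no inversions) --- is correctly identified and justified. What each approach buys: the paper's proof is very short once the $\FP_1$ machinery, induction functors and the $\Hom$--$\otimes$ identities of \cite{it:ratdiscoh} are available, and it scales to higher finiteness properties; yours is self-contained modulo Serre's normal form theorem, avoids the homological apparatus entirely, and produces an explicit compact generating set for each $G_v$. Two points you should make explicit in a polished write-up: base the normal-form comparison at the vertex $v_i$ itself (or work in the fundamental groupoid) so that the length-zero conclusion really lands in $H_i$, and dispose separately of the degenerate case where $G\backslash\caT$ is a single vertex with no edges, where the statement is vacuous.
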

\begin{proof}
 Recall that a t.d.l.c. group is compactly generated if, and only if, it is of type $\FP_1$ (cf. Prop.~\ref{prop:fp1}). Thus it is sufficient to prove that the trivial module $\Q$ is of type $\FP_1$ in $_{\Q[G_v]}\dis$, for all $v\in V(\ca{T})$. By property (i) and \eqref{eq:exseq}, one has that the following sequence
\begin{equation}
\label{eq:ses1}
\xymatrix{
0\ar[r]&\coprod_{\eue\in\ca{R}_{E}(\caT)} \Q[G/G_\eue]\ar[r]&
\coprod_{v\in\ca{R}_{V}(\caT)} \Q[G/G_v]\ar[r]&\Q\ar[r]&0
}
\end{equation}
is exact in $\QGmod$, where $\caR_{V}(\caT)$ is a set of representatives of the $G$-orbits on $V(\ca{T})$,
and  $\caR_{E}(\caT)$ is a set of representatives of the $C_2\times G$-orbits on $E(\caT)$. In particular, $\caR_{V}(\caT)$ and $\caR_{E}(\caT)$ are finite by (ii). Moreover, $G$ is acting discretely on $\ca{T}$, i.e. with open stabilizers, thus \eqref{eq:ses1} is a short exact sequence in $\QGdis$. Thus one may consider the induction functors $\idn_{G_*}^G\colon_{\Q[G_*]}\dis\to_{\Q[G]}\dis$, where $*\in\{v,\eue\mid v\in\caR_V(\caT),\eue\in\caR_E(\caT)\}$ (cf. \cite[\S 2.4]{it:ratdiscoh}). In particular \eqref{eq:ses1} can be reformulated as follows
\begin{equation}
\label{eq:ses2}
\xymatrix{
0\ar[r]&\coprod_{\eue\in\ca{R}_{E}(\caT)} \idn_{G_\eue}^G(\Q)\ar[r]&
\coprod_{v\in\ca{R}_{V}(\caT)} \idn_{G_v}^G(\Q)\ar[r]&\Q\ar[r]&0.
}
\end{equation}
For $G$ is a compactly generated t.d.l.c. group, the trivial module $\Q$ is of type $\FP_1$ in $\QGdis$. The permutation module $\coprod_{\eue\in\ca{R}_{E}(\caT)} \idn_{G_\eue}^G(\Q)$ with compact open stabilizers is a finitely generated projective discrete $\QG$-module, and so of type $\FP_1$ as well (cf. Proposition~\ref{prop:permod}). By applying the horseshoe lemma to \eqref{eq:ses2}, one has that $\coprod_{v\in\ca{R}_{V}(\caT)} \idn_{G_v}^G(\Q)$ is of type $\FP_1$ in $\QGdis$. Hence $\idn_{G_v}^G(\Q)$ is of type $\FP_1$ for every $v\in\ca{R}_{V}(\caT)$ (cf. \cite[Prop. 1.4 (a)]{bieri}). As the induction functor is exact and it is mapping projectives to projectives (cf. \cite[Proposition 3.4]{it:ratdiscoh}), one deduces that the trivial module $\Q$ is of type $\FP_1$ in $_{\Q[G_v]}\dis$, since $\idn_{G_v}^G(\Q)$ is of type $\FP_1$ in $\QGdis$ for every $v\in\ca{R}_{V}(\caT)$.
By conjugation, (iv) holds.
\end{proof}
\begin{rem} It is possible to extend the previous result to actions with edge inversions. In such a case, one has to consider the stabilizers $G_{\{\eue\}}$ of the geometric edges $\{\eue,\bar{\eue}\}$ (cf. \cite[Prop. 5.4]{it:ratdiscoh}). On the other hand, it is well-known that the condition about the action without edge-inversions is not properly a restriction, since it is always possible to consider the barycentric subdivision of the tree. 
\end{rem}

\begin{proof}[Proof of $(a)\Rightarrow (b)$]
Starting from a rough Cayley graph associated to $G$, one may use different techniques to construct a tree satisfying the hypothesis in the previous result  whenever $G$ has more than one rough end (cf. \cite{dunkron:vertexcuts}, \cite{dicksdun}). Thus the result follows by Proposition~\ref{prop:compgen} and Bass-Serre theory.
\end{proof}
\begin{rem}\label{kronmoller}
In \cite{kronmoller:rough}, to prove that a compactly generated t.d.l.c. group $G$ with more than one rough end splits non-trivially over a compact open subgroup (namely, $(a)\Rightarrow (b)$) the authors applied the following technique. Firstly, by using the theory of structure trees developed in \cite{dicksdun}, they construct a directed tree acted on by $G$ with finitely many orbits such that the edge stabilizers are compact and open and the vertex stabilizers are (open) subgroups of $G$. Secondly, they applied Bass-Serre theory of groups acting on trees to conclude that $G$ has to split. Finally, they had to prove that every vertex stabilizer $G_\alpha$ is compactly generated, which is the main part of the proof.  They achieve this final step by constructing a connected locally finite graph acted on transitively by $G_\alpha$ with compact open stabilizers (cf. \cite[Theorem 1]{kronmoller:rough}). This graph is obtained by means of a construction developed in \cite[Section 7]{random}. By Proposition~\ref{prop:compgen} instead, one directly deduces that the vertex stabilizers are compactly generated.
\end{rem}

\begin{proof}[Proof of $(b)\Rightarrow (c)$]
 Let $G$ split non-trivially over the compact open subgroup $K$, i.e., either (S1) or (S2) holds. The proof is split up as follows.

\noindent \textbf{Case 1.} According as the splitting type (i.e., either (S1) or (S2)), suppose $H$ and $J$ are both compact. By Bass-Serre's theory, $G$ is acting on the universal covering tree $\tilde\Gamma$, thus \eqref{eq:exseq} yields a short exact sequence 
\begin{equation}\label{eq:2--3}
\xymatrix{0\ar[r]&\ul{\E}(\tilde\Gamma)\ar[r]^{\delta}&\ul{\V}(\tilde\Gamma)\ar[r]&\Q\ar[r]&0},
\end{equation}
(cf. Example~\ref{ex:ses2}).
Since the vertex stabilizers are conjugated to $H$ (and $J$ respectively), $G$ is acting on $\tilde\Gamma$ with compact open stabilizers. Hence \eqref{eq:2--3} is a projective resolution of $\Q$ of length 1 in $\QGdis$, since it has discrete permutation $\QG$-modules with compact stabilizers in degree 0 and 1 (cf. Proposition~\ref{prop:permod}). Therefore $\ccd_{\Q}(G)=1$, as $G$ is non-compact (cf. Proposition~\ref{prop:cd}(a)). By Proposition \ref{prop:fp1}, since $G$ is compactly generated, $G$ is a t.d.l.c. group of type $\FP_1$ with $\ccd_ {\Q}(G)=1$,  so $G$ is of type $\FP$. Thus Proposition~\ref{prop:FPbg} yields the claim.

\noindent\textbf{Case 2.} Assume $G=H*_K^t$ and $H$ is non-compact. As shown in Example~\ref{ex:ses2}(b), one has the following short exact sequence in $\QGdis$
\begin{equation}
\xymatrix{0\ar[r]&\Q[G/K]\ar[r]^\delta&\Q[G/H]\ar[r]&\Q\ar[r]&0.}
\end{equation}
Recall that for every compact open subgroup $\caO$ of $G$ one has $$\Q[G/\caO]\cong\QG\otimes_{\Q[\caO]}\Q=\idn_\caO^G(\Q),$$ where $\idn_{\ca{O}}^G(\argu):{}_{\Q[\caO]}\dis\to\QGdis$ is the induction functor (cf. \cite[\S2.4]{it:ratdiscoh}). By the Eckmann-Shapiro type lemma \cite[\S2.9]{it:ratdiscoh}, applying the long exact cohomology functor with coefficients in $\biB(G)$ yields the long exact sequence
\begin{equation}\label{eq:les}
\xymatrix{0\ar[r]&\biB(G)^G\ar[r]&\biB(G)^H\ar[r]^{\delta^*}&\biB(G)^K\ar[d]\\
&&&\dH^1(G,\biB(G))\ar[d]\\
&&&\vdots}
\end{equation}
As $H$ is not compact, $\biB(G)^H=0$ (cf. Fact~\ref{fact:bg}). Thus \eqref{eq:les} gives an injective map from $\biB(G)^K$ to $\dH^1(G,\biB(G))$. For $K$ is compact, $\biB(G)^K\neq0$ and then $\dH^1(G,\biB(G))\neq 0$.

\noindent \textbf{Case 3.} Let $G=H*_KJ$ and  $H$  non-compact. The sequence
\begin{equation}
\xymatrix{0\ar[r]&\Q[G/K]\ar[r]^-{\delta}&\Q[G/H]\oplus\Q[G/J]\ar[r]&\Q\ar[r]&0,}
\end{equation}
is exact in $\QGdis$ (cf.~Example \ref{ex:ses2}(a)).

Now for $H$ is not compact, applying the long exact cohomology functor with coefficients in $\Q[G/K]$ yields the long exact sequence
\begin{equation}\label{eq:les1}
\xymatrix{0\ar[r]&\Q[G/K]^G\ar[r]&\Hom_G(\Q[G/J],\Q[G/K])\ar[r]^-{\delta^*}&\End_G(\Q[G/K])\ar[d]\\
&&&\dH^1(G,\Q[G/K])\ar[d]\\
&&&\vdots}
\end{equation}
It follows that $\dH^1(G,\Q[G/K])\neq0$. Indeed suppose firstly $J$ to be non-compact. Thus Fact~\ref{fact:bg} implies that  $$\End_G(\Q[G/K])\to\dH^1(G,\Q[G/K])$$ in \eqref{eq:les1} is injective.

On the other hand, if $J$ is compact, we claim that $\delta^*$ cannot be surjective, and so $\dH^1(G,\Q[G/K])\neq0$ as well. 

Let us prove the claim. Recall that the map $\delta$ in \eqref{eq:les1} is given by $$\quad\qquad\delta:\Q[G/K]\to\Q[G/H]\oplus\Q[G/J],\quad\delta(gK)=gH-gJ,\quad \forall g\in G.$$ Let  $\varphi\in\Hom_G(\Q[G/J],\Q[G/K])$, thus one has \begin{equation}
\delta^*(\varphi)(g_1K)=\delta^*(\varphi)(g_2K),
\end{equation}
for all $g_1,g_2\in G$ such that $g_1g_2^{-1}\in J$. If $\delta^*$ is surjective, then there exists $\varphi$ such that $id_{\Q[G/K]}=\delta^*(\varphi)$. But $g_1K\neq g_2K$ for all $g_1,g_2\in G$ such that $g_1g_2^{-1}\in J\setminus K\neq\emptyset$, and the claim follows.

Finally, by Proposition~\ref{prop:fp1} and Remark~\ref{rem:fp},  one has
\begin{equation}
\label{eq:FP-2}
\textstyle{\dH^1(G,\biB(G))=\varinjlim_{\ca{CO}(G)}\dH^1(G,\Q[G/U]),}
\end{equation}
where $U$ is ranging over all compact open subgroups of $G$. Let $\ca{CO}_K(G)$ be the set of all compact open subgroups of $G$ contained in $K$. One has
\begin{equation}
\label{eq:FP-3}
\textstyle{\dH^1(G,\biB(G))=\varinjlim_{\ca{CO}_K(G)}\dH^1(G,\Q[G/U])\neq0,}
\end{equation}
since the map $$\dH^1(\eta_{U,V})\colon \dH^1(G,\Q[G/U])\to\dH^1(G,\Q[G/V])$$ is injective for all compact open subgroups $V\subseteq U\subseteq K$ of $G$ (cf. Proof of \cite[Proposition 4.7]{it:ratdiscoh}) and $\dH^1(G,\Q[G/K])\neq0$.
\end{proof}

In order to conclude the proof of Theorem~A* let us  provide two Lemmas that concur to clarify the expected connection between number of ends and degree--1 cohomology. 

Let $K$ be a compact open subgroup of $G$. Following \cite{dun:acc}, a subset \mbox{$B\subset G/K$} is called an \emph{almost $(G,K)$-invariant set} if the characteristic function $\chi_B$ of $B$ is an almost $(G,K)$-invariant function (cf.~$\S$\ref{ss:aif h1}). 
In other words, $B$ is an almost $(G,K)$-invariant set if $gB=_aB$ (i.e. the symmetric difference is finite) for all $g\in G$ and $kB=B$ for all $k\in K$. Thus we reformulate a result of C. Bamford and M.J. Dunwoody (cf.~\cite[Lemma~1.1]{dunbam}) as follows. 
\begin{lem}\label{lem:gen_ai} 
Let $G$ be a compactly generated t.d.l.c. group and let $(K,S)$ be a generating pair. Then the $\Q$-vector space $\ca{A}Inv_K(G,\Q)$ of all almost $(G,K)$-invariant functions is generated by 
$$\{\chi_B|B\ \mbox{almost $(G,K)$-invariant set}\}.$$
\end{lem}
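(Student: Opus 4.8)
The plan is to prove the inclusion by two moves: first to show that an arbitrary $\alpha\in\ca{A}Inv_K(G,\Q)$ takes only finitely many values, and then to decompose such a finitely-valued function as an explicit finite $\Q$-combination of characteristic functions of its super-level sets, each of which will turn out to be almost $(G,K)$-invariant. The reverse inclusion requires no work: by the very definition of an almost $(G,K)$-invariant set $B$, the function $\chi_B$ lies in $\ca{A}Inv_K(G,\Q)$, so the stated family is contained in the space and only the spanning assertion is at stake.

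For the first and decisive step I would fix $\alpha$ and introduce an auxiliary graph $\Lambda$ with vertex set $G/K$ whose geometric edges are $\{gK,sgK\}$ for $g\in G,\ s\in S$, together with the edges $\{gK,kgK\}$ for $g\in G,\ k\in K$ whenever $kgK\neq gK$. Since $(K,S)$ is a generating pair one has $\langle S\cup K\rangle=G$, and the set of cosets reachable from $K$ under left multiplication by $S$ and by $K$ is closed under a symmetric generating set; hence $\Lambda$ is connected. Call an edge \emph{bad} if $\alpha$ differs at its two endpoints. The edges of the second type are never bad, because $k\cdot\alpha=\alpha$ forces $\alpha$ to be constant on each left $K$-orbit, so $\alpha(kgK)=\alpha(gK)$. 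For an edge $\{gK,sgK\}$ of the first type, the identity $(s^{-1}\cdot\alpha)(gK)=\alpha(sgK)$ shows the edge is bad exactly when $gK$ lies in the support of $s^{-1}\cdot\alpha-\alpha$; by almost invariance this support is finite, and $S$ is finite, so $\Lambda$ has only finitely many bad edges. I regard this—pinning down a \emph{connected} graph on $G/K$ whose coboundary is governed by the \emph{left} almost-invariance of $\alpha$—as the main obstacle: the rough Cayley graph of the paper has right-multiplication edges that the left $G$-action does not control, and it is precisely the insertion of the $K$-orbit edges, legitimised by the $K$-invariance of $\alpha$, that restores connectivity in the t.d.l.c.\ setting.

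Since removing $m$ edges from a connected graph leaves at most $m+1$ connected components, deleting the finitely many bad edges from $\Lambda$ produces finitely many components; on each of them every surviving edge joins vertices with equal $\alpha$-value, so $\alpha$ is constant there. Consequently $\alpha$ takes only finitely many values, say $v_1<\cdots<v_n$ in $\Q$.

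For the decomposition I would set $B_i=\{x\in G/K\mid \alpha(x)\geq v_i\}$ for $2\leq i\leq n$. Each $B_i$ is almost $(G,K)$-invariant: it is exactly $K$-invariant because $\alpha$ is constant on $K$-orbits, and for $g\in G$ one has $gB_i\triangle B_i\subseteq\operatorname{supp}(g\cdot\alpha-\alpha)$, a finite set, since off that support $g\cdot\alpha$ and $\alpha$ agree and hence define the same super-level set. The whole space $G/K$ is trivially almost invariant. A pointwise check on each value then yields
$$\alpha=v_1\,\chi_{G/K}+\sum_{i=2}^{n}(v_i-v_{i-1})\,\chi_{B_i},$$
exhibiting $\alpha$ as a finite $\Q$-linear combination of characteristic functions of almost $(G,K)$-invariant sets, which proves the lemma.
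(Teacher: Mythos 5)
Your proof is correct. There is, however, no in-paper argument to compare it against: the paper states this lemma as a reformulation of \cite[Lemma~1.1]{dunbam} and gives no proof, so what you have supplied is a complete, self-contained adaptation of that classical argument (finitely many values via a connectivity-plus-coboundary count, then a telescoping decomposition into super-level sets). The one point that genuinely needs care in the t.d.l.c.\ setting is handled correctly: the action $(g\cdot\alpha)(x)=\alpha(g^{-1}x)$ controls the coboundary of $\alpha$ only along \emph{left}-multiplication edges $\{gK,sgK\}$, and the graph on $G/K$ with only those edges need not be connected since $\langle S\rangle$ may be a proper subgroup of $G$; your insertion of the $K$-orbit edges --- which are never bad because exact $K$-invariance forces $\alpha$ to be constant on left $K$-orbits --- restores connectivity precisely because $G=\langle S\cup K\rangle$ with $S$ symmetric. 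The remaining steps all check out: deleting $m$ edges from a connected graph leaves at most $m+1$ components; for each fixed $s\in S$ the bad edges are indexed by the finite support of $s^{-1}\cdot\alpha-\alpha$; the super-level sets $B_i$ satisfy $kB_i=B_i$ exactly and $gB_i\triangle B_i\subseteq\{x\mid (g\cdot\alpha)(x)\neq\alpha(x)\}$, hence are almost $(G,K)$-invariant in the sense the paper defines (i.e.\ $\chi_{B_i}\in\ca{A}Inv_K(G,\Q)$); and the identity $\alpha=v_1\chi_{G/K}+\sum_{i=2}^{n}(v_i-v_{i-1})\chi_{B_i}$ is verified pointwise. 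The converse containment is indeed immediate from the paper's definition of an almost $(G,K)$-invariant set.
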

Note that if $B$ is an almost $(G,K)$-invariant set, then its complement $B^*$ is also an almost $(G,K)$-invariant set. An almost $(G,K)$-invariant set $B\subset G/K$ is said to be \emph{proper} if $B,B^*$ are both infinite.
\begin{lem}\label{lem:proper}
Let $G$ be a compactly generated t.d.l.c. group and let $(K,S)$ be a generating pair of $G$. If there exists a proper almost $(G,K)$-invariant set, then $e(G)>1$.
\end{lem}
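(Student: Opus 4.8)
I want to show that the existence of a proper almost $(G,K)$-invariant set $B\subset G/K$ forces the rough Cayley graph $\Gamma$ (with respect to the generating pair $(K,S)$) to have more than one end. The idea is to pass from the algebraic/combinatorial object $B$ to a genuine cut of the graph $\Gamma$, and then invoke Fact~\ref{fact:cut_end}. Recall that $\V(\Gamma)=G/K$, so $B$ is literally a set of vertices. The key observation I would isolate first is that being \emph{almost $(G,K)$-invariant} is exactly the graph-theoretic condition that the edge-coboundary $\delta B$ (the set of edges of $\Gamma$ with exactly one endpoint in $B$) is \emph{finite}. Indeed, for each generator $s\in S$ the condition $sB=_a B$ says that the symmetric difference $B\,\triangle\,sB$ is finite; since the edges of $\Gamma$ are precisely the pairs $(gK,gsK)$ for $s\in S$, an edge lies in $\delta B$ iff its two endpoints $gK$ and $gsK$ are separated by $B$, i.e. iff $gK\in B\,\triangle\, s^{-1}B$ (up to the obvious bookkeeping with $s$ and $s^{-1}$). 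As $S$ is finite, finiteness of each $B\,\triangle\,sB$ gives finiteness of $\delta B$.

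\textbf{Main steps.} First I would make the translation above precise: given the proper almost $(G,K)$-invariant set $B$, verify that the induced subgraph $\ca{C}$ on the vertex set $B$ has finite coboundary $\delta\ca{C}=\delta B$ in the sense of \eqref{eq:cobound}. Second, I must arrange that $\ca{C}$ is \emph{connected} and \emph{infinite} and that its complement contains an infinite connected component, which are the precise hypotheses of Fact~\ref{fact:cut_end}. Infiniteness of both $B$ and $B^*=(G/K)\setminus B$ is handed to us by properness. The genuinely non-formal point is connectedness: a priori $B$ need not induce a connected subgraph. The standard remedy is that, since $\delta B$ is finite, $B$ breaks up into finitely many connected components all but finitely many of which are finite, so $B$ is a finite modification of a disjoint union of finitely many infinite connected pieces; at least one infinite component $\ca{C}$ survives, and replacing $B$ by the vertex set of $\ca{C}$ changes $B$ only by a finite set and hence keeps $\delta\ca{C}$ finite. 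The same argument applied to $B^*$ produces an infinite connected component on the other side. Feeding $\ca{C}$ into Fact~\ref{fact:cut_end} then yields $e(\Gamma)>1$, and since $e(\Gamma)=e(G)$ by Fact~\ref{fact:rcg_prop}(d) this is exactly the desired conclusion $e(G)>1$.

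\textbf{The main obstacle.} The crux is precisely the connectedness extraction in the second step: one must argue that a vertex set with finite edge-coboundary in a connected, locally finite, vertex-transitive graph has only finitely many connected components, of which only finitely many are infinite, so that discarding finite pieces leaves an honest infinite connected cut without destroying the finiteness of the coboundary. This uses local finiteness of $\Gamma$ (Fact~\ref{fact:rcg_prop}(a)) in an essential way — each finite coboundary touches only finitely many vertices, and finite components sprout only from a bounded neighborhood of $\delta B$. Once this combinatorial reduction is in hand the rest is bookkeeping, and the proof closes by quoting Facts~\ref{fact:cut_end} and~\ref{fact:rcg_prop}(d).
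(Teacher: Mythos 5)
Your overall strategy is the paper's: convert the proper almost $(G,K)$-invariant set into an infinite vertex set of $\Gamma$ with finite edge-coboundary, extract an infinite connected component on each side, and quote Fact~\ref{fact:cut_end}; your connectedness extraction (finitely many components because each must meet the finite coboundary, since $\Gamma$ is connected) is correct and in fact spells out a point the paper leaves implicit. However, your first step contains a genuine error. Almost invariance of $B$ is with respect to the \emph{left} $G$-action, $gB=_aB$, whereas adjacency in the rough Cayley graph is given by \emph{right} multiplication by generators: the neighbours of $gK$ are the cosets $gsK$. Your identification of $\{\,gK\in B,\ gsK\notin B\,\}$ with $B\triangle s^{-1}B$ silently replaces $gsK\in B$ by $sgK\in B$ (note $gK\in s^{-1}B$ means exactly $sgK\in B$), and in a nonabelian group these are different conditions. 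The intermediate claim really fails: take $G=F_2=\langle a,b\rangle$ discrete, $K=\{1\}$, $S=\{a^{\pm1},b^{\pm1}\}$, and $B$ the set of reduced words ending in $a$. Then $gB\triangle B$ is finite for every $g$ (e.g.\ $aB=B\setminus\{a\}$, $bB=B$), so $B$ is a proper almost $(G,K)$-invariant set, yet every edge $(wa,wab)$ lies in the coboundary of $B$, which is therefore infinite.

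The repair is the inversion trick the paper uses: replace $B$ by $C_B=\{\,gK\in G/K\mid g^{-1}K\in B\,\}$, which is well defined precisely because $kB=B$ for all $k\in K$. For this set one computes that the coboundary consists of cosets $hK\in B$ with $s^{-1}hK\notin B$, i.e.\ of elements of $B\setminus sB$, which is finite for each of the finitely many $s\in S$; so $C_B$ (and likewise $C_{B^*}$) is an infinite vertex set with finite coboundary, and your component-extraction argument then applies verbatim to $C_B$ and closes the proof via Fact~\ref{fact:cut_end} and Fact~\ref{fact:rcg_prop}(d). Without this left/right correction the proof does not go through.
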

\begin{proof}
Let $\Gamma=\Gamma(G,K,S)$ be the rough Cayley graph of $G$ with respect to the generating pair $(K,S)$. If $B\subset G/K$ is an infinite almost $(G,K)$-invariant set, in particular one has $kB=B$ for all $k\in K$. Thus one defines
$$C_B=\{\,gK\in G/K\mid g^{-1}K\in B\,\}\subset \V(\Gamma).$$
Clearly, $C_B$ is  infinite. Moreover, the set $C_B$ has finite boundary. Indeed,
$$\bar{\delta}C_B=\{\,gsK\notin C_B\mid gK\in C_B,\ s\in S\,\}=\{\,s^{-1}g^{-1}K\notin B\mid g^{-1}K\in B,\ s\in S\,\}.$$
Rearranging, we have 
$$\bar{\delta} C_B=\{\,gK\in G/K\mid \exists s\in S\ s.t.\ \chi_B(gK)\neq s\cdot\chi_B(gK)\,\},$$ which is a finite set by the almost invariance of $\chi_B$ and $|S|<\infty$. Clearly, if $B$ is proper then $C_B$ contains at least  a cut of $\Gamma$. Thus Fact \ref{fact:cut_end} completes the proof.
\end{proof}

\begin{proof}[Proof of $c)\Rightarrow a)$] Since
\begin{equation}
\textstyle{\dH^1(G,\biB(G))=\varinjlim_{\ca{CO}(G)}\dH^1(G,\Q[G/U])\not=0,}
\end{equation}
(cf. Proposition~\ref{prop:fp1} and Remark~\ref{rem:fp}), it suffices to prove that $e(G)>1$ if  there exists a compact open subgroup $K$ of $G$ such that \mbox{$\dH^1(G,\Q[G/K])\neq0.$}

Let $K$ be such a subgroup. By Proposition~\ref{prop:almcoh}, there is a non-trivial map $d\in \ca{A}Inv_K(G,\Q)$ which is neither constant on $G/K$ nor almost zero. Since $\ca{A}Inv_K(G,\Q)$ is $\Q$-generated by the characteristic functions of the almost $(G,K)$-invariant sets of $G$ (cf.~Lemma \ref{lem:gen_ai}), there exists an infinite almost $(G,K)$-invariant set $B\subsetneq G/K$. We claim that $B$ is proper. Then the statement follows by Lemma~\ref{lem:proper}.

 Let us prove the claim. Set $B^*=G\setminus B$ and $d^*=g\cdot \chi_{B^*}-\chi_{B^*}$. Clearly,  $d^*\in \dH^1(G,\Q[G/K])$ and $B^*$ is an infinite almost $(G,K)$-invariant set, i.e. $B$ is proper. 
\end{proof}

\section{Compactly presented t.d.l.c. groups of rational discrete cohomological dimension one}\label{s:kps}
Following \cite{it:ratdiscoh}, a {\it graph of profinite groups} $(\ca{G},\Lambda)$ based on the graph $\Lambda$
consists of the following data:
\begin{itemize}
\item[(G1)] a profinite group $\ca{G}_v$ for every vertex $v\in V(\Lambda)$;
\item[(G2)] a profinite group $\ca{G}_{\eue}$ for every edge $\eue\in E(\Lambda)$
satisfying $\ca{G}_{\eue}=\ca{G}_{\bar\eue}$;
\item[(G3)] an open embedding $\iota_{\eue}\colon\ca{G}_{\eue}\rightarrow\ca{G}_{t(\eue)}$ 
for every edge $\eue\in E(\Lambda)$.
\end{itemize}
The fundamental group of a graph of profinite groups carries naturally the structure of t.d.l.c. group. Indeed a neighbourhood basis of the identity is given by
$$\mathcal{B}:=\set{\caO\leq_{co} g\mathcal{G}_vg^{-1} \mid v\in\mathcal{V}(\Lambda),\ g\in\pi_1(\mathcal{G},\Lambda)},$$
where $\caO$ is a compact open subgroup of the vertex stabilizer $g\mathcal{G}_vg^{-1}$. We recall that a {\it generalized presentation} of a t.d.l.c. group $G$ is a graph of profinite groups $(\ca{G},\Lambda)$ together with a continuous open surjective
homomorphism
\begin{equation}
\label{eq:pres}
\phi\colon\pi_1(\ca{G},\Lambda)\longrightarrow G,
\end{equation}
such that $\phi\vert_{\ca{G}_v}$ is injective for all $v\in V(\Lambda)$. In particular, every t.d.l.c. group $G$ admits at least one generalized presentation $(\ca{G},\Lambda_0)$ based on a graph with a single vertex (cf. \cite[Proposition 5.10]{it:ratdiscoh}).
A t.d.l.c. group G is said to be \emph{compactly presented}, if there exists a generalized presentation $((\ca{G},\Lambda), \phi)$, such that
\begin{itemize}
\item[(i)] $\Lambda$ is a finite connected graph, and
\item[(ii)] $ K = ker(\phi)$ is a finitely generated as normal subgroup of the fundamental group $\Pi= \pi_1(\ca{G},\Lambda)$.
\end{itemize}
Clearly, the fundamental group of a finite graph of profinite groups is a compactly presented t.d.l.c. group. 
\begin{rem}
The notion of being compactly presented we use here is equivalent to the usual one defined for compactly generated locally compact groups (cf. \cite[Prop. 1.1.3]{ab:fp}).
\end{rem}
Recall that a compactly generated t.d.l.c. group $G$ is accessible if, and only if, it has an action on a tree $\caT$ such that:
\begin{itemize}
\item[(A1)] the number of orbits of G on the edges of $\caT$ is finite;
\item[(A2)] the stabilizers of edges in $\caT$ are compact open subgroups of G;
\item[(A3)] every stabilizer of a vertex in $\caT$ is a compactly generated open subgroup of G and has
at most one rough end.
\end{itemize}
\begin{thmB}\label{thm:cd1}
Let $G$ be a t.d.l.c. group. Thus the following are equivalent:
\begin{itemize}
\item[(i)] $G$ is a compactly presented t.d.l.c. group with $\ccd_\Q(G)\leq1$,
\item[(ii)] $G$ is isomorphic to the fundamental group $\pi_1(\ca{G},\Lambda)$ of a finite graph of profinite groups $(\ca{G},\Lambda)$.
\end{itemize}
\end{thmB}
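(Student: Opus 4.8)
The plan is to prove the two implications separately, with $(ii)\Rightarrow(i)$ essentially formal and $(i)\Rightarrow(ii)$ carrying the weight of the argument.

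For $(ii)\Rightarrow(i)$: if $G=\pi_1(\ca{G},\Lambda)$ with $(\ca{G},\Lambda)$ a finite graph of profinite groups, then $G$ is compactly presented by the remark preceding the theorem. To bound the cohomological dimension I would let $G$ act on the associated Bass--Serre tree $\caT$. Since every vertex and edge group is profinite, hence compact open in $G$, the action is discrete with compact open stabilizers, the quotient $G\backslash\caT=\Lambda$ is finite, and the action can be taken without edge inversions (passing to the barycentric subdivision if necessary). Feeding this action into the exact sequence from Example~\ref{ex:ses2}, exactly as in the proof of Proposition~\ref{prop:compgen}, produces a short exact sequence
$$0\to\textstyle\coprod_{\eue}\Q[G/G_\eue]\to\coprod_{v}\Q[G/G_v]\to\Q\to 0$$
in $\QGdis$ with finitely many summands. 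By Proposition~\ref{prop:permod} the two nonzero terms are projective, so this is a projective resolution of $\Q$ of length at most $1$; hence $\ccd_\Q(G)\le 1$.

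For $(i)\Rightarrow(ii)$: assume $G$ is compactly presented with $\ccd_\Q(G)\le 1$. Being compactly presented, $G$ is in particular compactly generated, and by the accessibility theorem of Cornulier \cite{corn:qi} it is accessible. Thus $G$ admits an action on a tree $\caT$ satisfying (A1)--(A3): finitely many edge orbits, compact open edge stabilizers, and vertex stabilizers $G_v$ that are compactly generated open subgroups with at most one rough end. Bass--Serre theory then identifies $G$ with $\pi_1(\ca{G},\Lambda)$ for $\Lambda=G\backslash\caT$ finite, all edge groups compact open, and vertex groups $G_v$. It therefore remains only to upgrade each vertex group from ``at most one rough end'' to ``compact'', so that $(\ca{G},\Lambda)$ becomes a graph of \emph{profinite} groups.

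The decisive step is cohomological. Each $G_v$ is a closed (indeed open) subgroup of $G$, so $\ccd_\Q(G_v)\le\ccd_\Q(G)\le 1$ by Proposition~\ref{prop:cd}(b). Since $G_v$ is compactly generated it is of type $\FP_1$ (Proposition~\ref{prop:fp1}); combined with $\ccd_\Q(G_v)\le 1$, a finitely generated partial resolution $P_1\to P_0\to\Q\to 0$ has first syzygy $\ker(P_0\to\Q)$ finitely generated (by the $\FP_1$ property) and projective (by $\ccd_\Q(G_v)\le 1$), so $G_v$ is in fact of type $\FP$. Now I invoke Theorem~A* for $G_v$: since $e(G_v)\le 1$, condition (c) fails, i.e.\ $\dH^1(G_v,\biB(G_v))=0$. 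Proposition~\ref{prop:FPbg} gives $\ccd_\Q(G_v)=\max\{k\ge 0\mid \dH^k(G_v,\biB(G_v))\ne 0\}$; as the degree-$1$ term vanishes and $\ccd_\Q(G_v)\le 1$, this maximum is $0$, whence $\ccd_\Q(G_v)=0$ and $G_v$ is compact by Proposition~\ref{prop:cd}(a). Being a compact t.d.l.c.\ group, $G_v$ is profinite, and the conclusion follows. The main obstacle is the passage from accessibility data to a genuinely profinite graph of groups: accessibility is imported from Cornulier and is precisely where the compactly presented (rather than merely compactly generated) hypothesis is used---this is why the analogous statement for compactly generated groups remains open (the Question of $\S$\ref{s:intro}). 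The remaining delicate point is to confirm that each $G_v$ is of type $\FP$ before applying Proposition~\ref{prop:FPbg}, and that the vanishing $\dH^1(G_v,\biB(G_v))=0$ furnished by Theorem~A* is strong enough to collapse the cohomological dimension of $G_v$ all the way to zero.
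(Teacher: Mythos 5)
Your proposal is correct and follows essentially the same route as the paper: the forward direction via the Bass--Serre tree action with compact open stabilizers and Proposition~\ref{prop:permod}, and the converse via Cornulier's accessibility theorem, the bound $\ccd_\Q(G_v)\le 1$ from Proposition~\ref{prop:cd}(b), type $\FP$ for the vertex stabilizers, and the combination of Theorem~A* with Proposition~\ref{prop:FPbg} to force $\ccd_\Q(G_v)=0$ and hence compactness. Your explicit justification that each $G_v$ is of type $\FP$ is a welcome expansion of a step the paper passes over quickly.
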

\begin{proof} Clearly, the fundamental group $\Pi$ of a finite graph of profinite groups is a compactly presented t.d.l.c. group. Moreover $\Pi$ acts on its universal covering
tree without inversion of edges and with compact open vertex stabilizers. Then $\ccd_\Q(\Pi) \leq 1$ (cf.~\eqref{eq:exseq} and Proposition~\ref{prop:permod}). 

Conversely, let $G$ be a compactly presented t.d.l.c. group. By (a) of Proposition~\ref{prop:cd}, if $\ccd_\Q(G)=0$, then $G$ is profinite and there is nothing to prove. Let $\ccd_\Q(G)=1$. As $G$ is compactly presented, by \cite[Theorem 4.H.1]{corn:qi} $G$ is accessible. Thus $G$ is acting on a tree $\caT$ with finitely many orbits on the set of edges and compact open edge stabilizers. Moreover every vertex stabilizer $G_v$ is a compactly generated open subgroup of $G$ with at most one end. By Theorem~A*, for all $v\in V(\caT)$ one has $\dH^1(G_v,\biB(G_v))=0$.  By Propositions~\ref{prop:fp1} and \ref{prop:cd}(b), $G_v$ is of type $\FP_1$ with $\ccd_{\Q}(\ca{G}_v)\leq1$, i.e., $\ca{G}_v$ is of type $\FP$ for any vertex $v$. Hence Proposition~\ref{prop:FPbg} implies $\ccd_\Q(G_v)=0$ for all $v\in V(\caT)$, i.e., $G_v$ is compact (cf. Proposition \ref{prop:cd}(a)). Finally, Bass-Serre's theory yields the claim. 
\end{proof}
\begin{rem} Clearly Theorem~B can be regarded as the analogue for t.d.l.c. groups of the Karrass-Pietrowski-Solitar theorem for virtually free groups, and in particular of Dunwoody's result \cite[Thm.~1.1]{dun:acc} on accessibility of discrete groups of cohomological dimension one.
\end{rem}


\end{document}